\title{Weak semiconvexity estimates for Schr\"odinger potentials and logarithmic Sobolev inequality for Schr\"odinger bridges}
\newcommand{\bbR}{\mathbb{R}}
\newcommand{\bbRD}{\mathbb{R}^d}
\newcommand{\cF}{\mathcal{F}}
\newcommand{\cH}{\mathcal{H}}
\newcommand{\cU}{\mathcal{U}}
\newcommand{\bes}{\begin{equation*}}
\newcommand{\ees}{\end{equation*}}
\newcommand{\beas}{\begin{eqnarray*}}
\newcommand{\eeas}{\end{eqnarray*}}
\newcommand{\bea}{\begin{eqnarray}}
\newcommand{\eea}{\end{eqnarray}}
\newcommand{\be}{\begin{equation}}
\newcommand{\ee}{\end{equation}}
\newcommand{\bei}{\begin{itemize}}
\newcommand{\eei}{\end{itemize}}
\newcommand{\bec}{\begin{cases}}
\newcommand{\eec}{\end{cases}}
\newcommand{\ben}{\begin{enumerate}}
\newcommand{\een}{\end{enumerate}}
\newcommand{\bbP}{\mathbb{P}}
\newcommand{\bbE}{\mathbb{E}}
\newcommand{\bbQ}{\mathbb{Q}}
\newcommand{\bbl}{\begin{block}}
\newcommand{\ebl}{\end{block}}
\newcommand{\De}{\mathrm{d}}
\newcommand{\rmI}{\mathrm{I}}
\newcommand{\rme}{\mathrm{e}}
\newtheorem{prop}{Proposition}[section]
\newtheorem{theorem}{Theorem}[section]
\newtheorem{lemma}{Lemma}[section]
\newtheorem{remark}{Remark}[section]
\newtheorem{assumption}[theorem]{Assumption}
\newtheorem{cor}{Corollary}[section]
\newcommand{\hjb}[3]{U^{{#1},{#2}}_{#3}}
\newcommand{\ent}{\mathrm{Ent}}
\author{Giovanni Conforti \thanks{CMAP, CNRS, Ecole polytechnique, Institut Polytechnique de Paris, 91120 Palaiseau, France
 \emph{E-mail address}: giovanni.conforti@polytechnique.edu. Research supported by the ANR project  ANR-20-CE40-0014.}, }%%
\begin{document}
\maketitle
\tableofcontents
\newpage
\begin{abstract}
    We investigate the quadratic Schr\"odinger bridge problem, a.k.a. Entropic Optimal Transport problem, and obtain weak semiconvexity and semiconcavity bounds on Schr\"odinger potentials under mild assumptions on the marginals that are substantially weaker than log-concavity. We deduce from these estimates that Schr\"odinger bridges satisfy a logarithmic Sobolev inequality on the product space. Our proof strategy is based on a second order analysis of coupling by reflection on the characteristics of the Hamilton-Jacobi-Bellman equation that reveals the existence of new classes of invariant functions for the corresponding flow.
\end{abstract}
\paragraph{Mathematics Subject Classification (2020)}{49Q22,49L12,35G50,60J60,39B62}

\section{Introduction and statement of the main results}
The Schr\"odinger problem \cite{Schr} (SP) is a statistical mechanics problem that consists in finding the most likely evolution of a cloud of independent Brownian particles conditionally to observations. Also known as Entopic Optimal Transport (EOT) problem and formulated with the help of large deviations theory as a constrained entropy minimization problem, it stands nowadays at the cross of several research lines ranging from functional inequalities \cite{conforti2019second,gentil2020dynamical}, statistical machine learning \cite{cuturi2013sinkhorn,peyre2019computational}, control engineering \cite{chen2014relation,chen2021stochastic}, and numerics for PDEs \cite{benamou2015iterative,benamou2021optimal}. Given two probability distributions $\mu,\nu$ on $\bbRD$, the corresponding (quadratic) Schr\"odinger problem is
\be\label{eq:SP}
\inf_{\pi\in\Pi(\mu,\nu)} \cH(\pi|R_{0T}),
\ee
where $\Pi(\mu,\nu)$ represents the set of couplings of $\mu$ and $\nu$ and $\cH(\pi|R_{0T})$ is the relative entropy of a coupling $\pi$ computed against the joint law $R_{0T}$ at times $0$ and $T$ of a Brownian motion with initial law $\mu$.
It is well known that under mild conditions on the marginals, the optimal coupling $\hat{\pi}$, called (static) Schr\"odinger bridge, is unique and admits the representation
\be\label{eq:opt_coup}
\hat{\pi}(\De x\,\De y) = \exp(-\varphi(x)-\psi(y))\exp\Big(-\frac{|x-y|^2}{2T}\Big)\De x\De y
\ee
 where $\varphi,\psi$ are two functions, known as Schr\"odinger potentials \cite{LeoSch} that can be regarded as proxies for the Brenier potentials of optimal transport, that are recovered in the short-time ($T\rightarrow0$) limit \cite{nutz2022entropic,chiarini2022gradient}. In this article we seek for convexity and concavity estimates for Schr\"odinger potentials. Such estimates have been recently established in \cite{chewi2022entropic} and \cite{fathi2019proof} working under a set of assumptions that implies in particular log-concavity of at least one of the two marginals. Such assumption is crucial therein as it allows to profit from classical functional inequalities such as Prékopa-Leindler inequality and Brascamp-Lieb inequality. In particular, the estimates obtained in the above-mentioned works yield alternative proofs of Caffarelli's contraction Theorem \cite{caffarelli2000monotonicity} in the short-time limit. The purpose of this work is twofold: in { the} first place we show at Theorem \ref{thm:semiconvexity_estimate_Schr_pot} that, for any fixed $T>0$ it is possible to leverage the probabilistic interpretation of \eqref{eq:SP} to establish lower and upper bounds on the functions
 \bes 
\langle  \nabla \varphi(x)-\nabla\varphi(y),x-y \rangle \quad \text{and} \quad \langle  \nabla \psi(x)-\nabla\psi(y),x-y \rangle 
 \ees 
 that are valid for all $x,y\in\bbRD$ and do not require strict log-concavity of the marginals to hold, but still allow to recover the results of \cite{chewi2022entropic} as a special case. The second main contribution is to apply these bounds to prove that static Schr\"odinger bridges satisfy the logarithmic Sobolev inequality (LSI for short) at Theorem \ref{thm:LSI}.  In our main results we shall quantify the weak semiconvexity of a potential $U:\bbRD\longrightarrow\bbR$ appealing to the function $\kappa_U$, defined as follows:
\be\label{eq:kappa_U}
\kappa_U:(0,+\infty)\longrightarrow\bbR,\quad \kappa_{U}(r)= \inf\{|x-y|^{-2}\langle\nabla U(x)-\nabla U(y),x-y\rangle :|x-y|=r\}.
\ee
$\kappa_U(r)$ may be regarded as an averaged or integrated convexity lower bound for $U$ for points that are at distance $r$. This function is often encountered in applications of the coupling method to the study of the long time behavior of Fokker-Planck equations \cite{eberle2016reflection,lindvall1986coupling}. Obviously $\kappa_U\geq0$ is equivalent to the convexity of $U$, but working with non-uniform lower bounds on $\kappa_U$ allows to design efficient generalizations of the classical notion of convexity. A commonly encountered sufficient condition on $\kappa_U$ ensuring the exponential trend to equilibrium of the Fokker-Planck equation 
\bes 
\partial_t\mu_t -\frac{1}{2}\Delta\mu_t -\nabla\cdot\big( \nabla U\,\mu_t\big)=0 
\ees
is the following 
\be\label{eq:Eberle_ass}
\kappa_{U}(r)\geq \begin{cases} \alpha, \quad & \mbox{if $r> R$,}\\
\alpha-L', \quad & \mbox{if $r\leq R$,}
\end{cases}
\ee
for some $\alpha>0,L',R\geq0.$ In this work, we refer to assumptions of the form \eqref{eq:Eberle_ass} and variants thereof as to weak convexity assumptions and our main result require an assumption of this kind, namely \eqref{eq:nu_ass} below, that is shown to be no more demanding than \eqref{eq:Eberle_ass} (see Proposition \ref{prop:gen_ass}), and is expressed through a rescaled version of the hyperbolic tangent function. These functions play a special role in this work since, as we show at Theorem \ref{thm:propagation}, they define a weak convexity property that propagates backward along the flow of the Hamilton-Jacobi-Bellman (HJB) equation
 \bes
    \partial_{t}\varphi_t+\frac{1}{2}\Delta \varphi_t -\frac12|\nabla \varphi_t|^2=0.
 \ees

Such invariance property represents the main innovation in our proof strategy: the propagation of the classical notion of convexity along the HJB equation used in \cite{fathi2019proof} as well as the Brascamp-Lieb inequality employed in \cite{chewi2022entropic} are both consequences of the Prékopa-Leindler inequality, see \cite{BrLieb76}. In the framework considered here, such { a} powerful tool becomes ineffective due to the possible lack of log-concavity in both marginals. To overcome this obstacle we develop a probabilistic approach based on a second order analysis of coupling by reflection on the solutions of the SDE
\bes
\De X_t = -\nabla \varphi_t(X_t)\De t +\De B_t,
\ees
also known as characteristics of the HJB equation, revealing the existence of novel classes of weakly convex functions that are invariant for the HJB flow. This property, besides being a key ingredient in the proof of Theorem \ref{thm:semiconvexity_estimate_Schr_pot} is interesting on its own, and can be generalized in several directions. Remarkably, Theorem \ref{thm:semiconvexity_estimate_Schr_pot} can be aplied to show that Schr\"odinger bridges satisfy LSI: this is not a trivial task since static Schr\"odinger bridges are not known to be log-concave probability measures in general, not even in the case when both marginals are strongly log-concave. For this reason, one cannot infer LSI directly from Theorem \ref{thm:semiconvexity_estimate_Schr_pot} and the Bakry-\'Emery criterion. However, reintroducing a dynamical viewpoint and representing Schr\"odinger bridges as Doob $h$-transforms of Brownian motion \cite{Doob1957} reveals all the effectiveness of Theorem \ref{thm:semiconvexity_estimate_Schr_pot} that gives at once gradient estimates and local (or conditional, or heat kernel) logarithmic Sobolev inequalities and gradient estimates for the $h$-transform semigroup. By carefully mixing the local inequalities with the help of gradient estimates, we finally establish at Theorem \ref{thm:LSI} LSI for $\hat{\pi}$, that is our second main contribution. It is worth noticing that in the $T\rightarrow+\infty$ asymptotic regime, our approach to LSI can be related to the techniques recently developed in \cite{mikulincer2022lipschitz} to construct Lipschitz transports between the Gaussian distribution and probability measures that are approximately log-concave in a suitable sense.
Because of the intrinsic probabilistic nature of our proof strategy, our ability to compensate for the lack of log-concavity in the marginals depends on the size of the regularization parameter $T$, and indeed vanishes as $T\rightarrow0$. Thus, our main results do not yield any sensible convexity/concavity estimate on Brenier potentials that improves on Caffarelli's Theorem. On the other hand, the semiconvexity bounds of Theorem \ref{thm:semiconvexity_estimate_Schr_pot} find applications beyond LSI, that we shall address in future works. For example, following classical arguments put forward in \cite{djellout2004transportation}, they can be shown to imply transport-entropy (a.k.a. Talagrand) inequalities on path space for dynamic Schr\"odinger bridges. Moreover, building on the results of \cite{conforti2019second}, they shall imply new semiconvexity estimates for the Fisher information along entropic interpolations. It is also natural to conjecture that these bounds will provide with new stability estimates for Schr\"odinger bridges under marginal perturbations, thus addressing a question that  has recently drawn quite some attention, see \cite{deligiannidis2021quantitative,chiarini2022gradient,eckstein2021quantitative,ghosal2022convergence, bayraktar2022stability} for example. Finally, we point out that Hessian bounds for potentials can play a relevant role in providing theoretical guarantees for learning algorithms that make use of dynamic Schr\"odinger bridges and conditional processes. In this framework, leveraging Doob's $h$-transform theory and time reversal arguments, they directly translate into various kinds of quantitative stability estimates for the diffusion processes used for sampling, see e.g. \cite{de2021diffusion,de2021simulating,shi2022conditional}.  
%Moreover, these estimates could contribute to advance on the problem of obtaining convergence rates for Sinkhorn's algorithm for non-compact marginals and unbounded costs. 

\paragraph{Organization} The document is organized as follows. In remainder of the first section we {state and discuss} our main hypothesis and results. In Section \ref{sec:propagation} we study invariant sets for the HJB flow. Sections \ref{sec:proofs} and \ref{sec:LSI} are devoted to the proof of our two main results, Theorem \ref{thm:semiconvexity_estimate_Schr_pot} and Theorem \ref{thm:LSI}. Technical results and background material are collected in the Appendix section.
\begin{assumption}\label{ass:marginals}
We assume that $\mu,\nu$ admit a positive density against the Lebesgue measure which can be written in the form $\exp(-U^{\mu})$ and $\exp(-U^{\nu})$ respectively. $U^{\mu},U^{\nu}$ are of class $C^2(\bbRD)$.
\begin{enumerate}
\item[$(\mathbf{H1})$]\label{itm:H1} $\mu$ has finite second moment and finite relative entropy against the Lebsegue measure. Moreover, there exists $\beta_{\mu}>0$ such that
\be\label{eq:mu_ass}
\langle v,\nabla^2U^{\mu}(x)v\rangle \leq \beta_{\mu} |v|^2\quad \forall x,v\in\bbRD.
\ee
\end{enumerate}
One of the following holds
\begin{enumerate}
\item[$(\mathbf{H2})$] There exist $\alpha_{\nu},L>0$ such that
\be\label{eq:nu_ass}
\kappa_{U^{\nu}}(r) \geq \alpha_{\nu}- r^{-1}f_{L}(r) \quad \forall r>0,
\ee
where the function $f_L$ is defined for any $L>0$ by:
\bes
f_L:[0,+\infty]\longrightarrow [0,+\infty], \quad {f_{L}(r) = 2\,L^{1/2}\tanh\Big((rL^{1/2})/2\Big)}.
\ees
\item[$(\mathbf{H2'})$] There exist $\alpha_{\nu}>0,R,L'\geq 0$ such that
\bes
\kappa_{U^{\nu}}(r)\geq \begin{cases} \alpha_{\nu}, \quad & \mbox{if $r> R$,}\\
\alpha_{\nu}-L', \quad & \mbox{if $r\leq R$.}
\end{cases}
\ees
In this case, we set 
\be\label{eq:bar_L_L}
L=\inf\{ \bar{L}:R^{-1}f_{\bar{L}}(R)\geq L'\}.
\ee
\end{enumerate}
\end{assumption}
Clearly, imposing \eqref{eq:nu_ass} is less restrictive than asking that $\nu$ is strongly log-concave. 
\begin{remark}
We  show that $(\mathbf{H2'})$ implies $(\mathbf{H2})$ at Proposition \ref{prop:gen_ass}. { However, since $(\mathbf{H2'})$ is more familiar to most readers we prefer to keep a statement Theorem \ref{thm:semiconvexity_estimate_Schr_pot} that makes use of this assumption.}
\end{remark}
\begin{remark}
The requirement that the density of $\nu$ is strictly positive everywhere could be dropped at the price of additional technicalities. For $\mu$, such requirement is a consequence of \eqref{eq:mu_ass}.
\end{remark}

\paragraph{The Schr\"odinger system} Let $(P_t)_{t\geq 0}$ the semigroup generated by a $d$-dimensional standard Brownian motion. For given marginals, $\mu,\nu$ and $T>0$ the Schr\"odinger system is the following system of coupled non-linear equations
\be\label{eq:schr_syst}
\begin{cases}
\varphi(x) =  U^{\mu}(x) + \log P_T \exp(-\psi)(x), \quad x\in\bbRD,\\
\psi(y) =  U^{\nu}(y) +\log P_T \exp(-\varphi)(y), \quad y\in\bbRD.
\end{cases}
\ee
Under Assumption \ref{ass:marginals}, it is known that the Schr\"odinger system admits a solution $(\varphi,\psi)$, and that if $(\bar\varphi,\bar\psi)$ is another solution, then there exists $c\in\bbR$ such that $(\varphi,\psi)=(\bar\varphi+c,\bar\psi-c)$, see \cite[sec. 2]{nutz2022entropic}\cite{LeoSch} and references therein. The potentials $\varphi,\psi$ are known as \emph{Schr\"odinger potentials} or entropic Brenier potentials in the literature.
\paragraph{Weak semiconvexity and semiconcavity bounds for Schr\"odinger potentials}

In the rest of the article, given a scalar function $U$, any pointwise lower bound on $\kappa_U$  implying in particular that 
\bes
\liminf_{r \rightarrow +\infty} \kappa_U(r)>-\infty
\ees
shall be called a weak semiconvexity bound for $U$. Next, in analogy with \eqref{eq:kappa_U}  we introduce for a differentiable $U:\bbRD\longrightarrow\bbR$ the function $\ell_U$ as follows:
\bes
\ell_U:(0,+\infty)\longrightarrow\bbR,\quad \ell_U(r) = \sup\{ |x-y|^{-2}\langle \nabla U(x)-\nabla U(y), x-y \rangle: |x-y|=r\},
\ees
and call a weak semiconcavity bound for $U$ any pointwise upper bound for $\ell_U$ implying in particular that 
\bes
\limsup_{r\rightarrow+\infty} \ell_U(r)<+\infty.
\ees
Our first main result is about weak semiconvexity and weak semiconcavity bounds for Schr\"odinger potentials.
\begin{theorem}\label{thm:semiconvexity_estimate_Schr_pot}
Let Assumption \ref{ass:marginals} hold and $(\varphi,\psi)$ be solutions of the Schr\"odinger system. Then $\varphi,\psi$ are twice differentiable and for all $r>0$ we have
\be\label{semiconvexity_estimate_Schr_pot}
\kappa_{\psi}(r) \geq \alpha_{\psi} - r^{-1}f_{L}(r),
\ee
\be\label{semiconcavity_estimate_Schr_pot}
\ell_{\varphi} (r)\leq \beta_{\mu} -\frac{ \alpha_{\psi}}{(1+T\alpha_{\psi})}+\frac{r^{-1}f_L(r)}{(1+T\alpha_{\psi})^2},
\ee
where $\alpha_{\psi}>\alpha_{\nu}-1/T$ can be taken to be the smallest solution of the fixed point equation
\be\label{eq:fixed_point}
\alpha= \alpha_{\nu}-\frac{1}{T} + \frac{G(\alpha,2)}{2T^2}, \quad \alpha \in (\alpha_{\nu}-1/T,+\infty)
\ee
where for all $\alpha\geq\alpha_{\nu}-1/T$:
\be\label{eq:def_of_G}
\begin{split}
G(\alpha,u) &= \inf\{ s\geq 0: F(\alpha,s)\geq u\}, \quad u>0,\\
F(\alpha,s) &=\beta_{\mu}s +\frac{ s}{T(1+T\alpha)}+\frac{s^{1/2}f_L(s^{1/2})}{(1+T\alpha)^2}, \quad s>0.
\end{split}
\ee
\end{theorem}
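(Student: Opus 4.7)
The plan is to set up a self-consistent fixed-point argument on the Schr\"odinger system \eqref{eq:schr_syst} by transporting information between the two potentials through the backward HJB propagation of Section \ref{sec:propagation}. As a preliminary step, I rewrite each equation of \eqref{eq:schr_syst} as a Cole--Hopf transform: setting $\bar\varphi_t(x):=-\log P_{T-t}\exp(-\psi)(x)$, a direct computation shows that $\bar\varphi_\cdot$ solves the HJB equation on $[0,T]$ with terminal datum $\bar\varphi_T=\psi$ and, by the first line of \eqref{eq:schr_syst}, initial value $\bar\varphi_0=U^{\mu}-\varphi$; symmetrically, $\tilde\psi_t(y):=-\log P_{T-t}\exp(-\varphi)(y)$ solves HJB with $\tilde\psi_T=\varphi$ and $\tilde\psi_0=U^{\nu}-\psi$. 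The claimed twice differentiability of $\varphi,\psi$ is inherited from standard regularity properties of these Cole--Hopf representations under Assumption \ref{ass:marginals}.

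I then postulate the ansatz $\kappa_\psi(r)\geq\alpha-r^{-1}f_L(r)$ for some $\alpha>\alpha_\nu-1/T$ to be determined, and apply Theorem \ref{thm:propagation} to $\bar\varphi_\cdot$ over $[0,T]$ to transfer this bound to $\bar\varphi_0$, obtaining an estimate of the form $\kappa_{\bar\varphi_0}(r)\geq\alpha/(1+T\alpha)-r^{-1}f_L(r)/(1+T\alpha)^{2}$. This reproduces at the level of $\kappa_U$ the classical Pr\'ekopa prescription $\alpha\mapsto\alpha/(1+T\alpha)$ for the heat flow of a strongly convex function, together with the $(1+T\alpha)^{-2}$ rescaling of the weak perturbation $f_L$ imposed by the coupling-by-reflection analysis of Section \ref{sec:propagation}. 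Since $\nabla\bar\varphi_0=\nabla U^{\mu}-\nabla\varphi$ and $\ell_{U^{\mu}}(r)\leq\beta_\mu$ by \eqref{eq:mu_ass}, the pointwise identity $r^{2}\ell_\varphi(r)\leq r^{2}\beta_\mu-r^{2}\kappa_{\bar\varphi_0}(r)$ yields exactly the semiconcavity bound \eqref{semiconcavity_estimate_Schr_pot}.

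To close the fixed point and recover \eqref{semiconvexity_estimate_Schr_pot}, I feed the freshly obtained semiconcavity bound on $\varphi$ into the second Schr\"odinger equation by running HJB propagation backward for $\tilde\psi_\cdot$, with terminal datum $\varphi$ now constrained by \eqref{semiconcavity_estimate_Schr_pot}. The companion semiconcavity propagation from Section \ref{sec:propagation} produces an upper bound on $\ell_{\tilde\psi_0}(r)$ which, combined with $\kappa_{U^\nu}(r)\geq\alpha_\nu-r^{-1}f_L(r)$ from \eqref{eq:nu_ass} and $\nabla\psi=\nabla U^\nu-\nabla\tilde\psi_0$, delivers a new lower bound on $\kappa_\psi$ of the same functional form as the ansatz. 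Tracking at which $r^{2}$-scale the resulting inequality becomes binding is what forces the auxiliary map $F(\alpha,s)$ and its pseudo-inverse $G(\alpha,\cdot)$ of \eqref{eq:def_of_G} to appear, and self-consistency of the ansatz reduces to the fixed-point equation \eqref{eq:fixed_point}. Existence of a smallest solution $\alpha_\psi>\alpha_\nu-1/T$ then follows from continuity and monotonicity of $\alpha\mapsto\alpha_\nu-1/T+G(\alpha,2)/(2T^{2})$ on $(\alpha_\nu-1/T,+\infty)$.

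The main obstacle I anticipate is the second leg of this bootstrap: Theorem \ref{thm:propagation} is tailored to the weakly convex class $\kappa\geq\alpha-r^{-1}f_L(r)$, and transporting a semiconcavity upper bound of the matching form $\ell\leq\beta+r^{-1}f_L(r)/(\text{const})^{2}$ requires that the coupling-by-reflection estimate of Section \ref{sec:propagation} be genuinely two-sided. The rescaled hyperbolic tangent shape of $f_L$ is crucial here: it is precisely what makes both directions of the HJB analysis close within a single functional class, which is why a Gaussian or other simpler perturbation would not suffice and why $f_L$ has to be threaded through the implicit-function structure encoded by $F$ and $G$.
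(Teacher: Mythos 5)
Your first leg of the bootstrap is aligned with the paper: Lemma \ref{lem:concavity_propagation} indeed transfers a weak semiconvexity bound on $\psi$ to a weak semiconcavity bound on $\varphi$ by applying Theorem \ref{thm:propagation} to the shifted potential $\hat\psi=\psi-\tfrac{\alpha}{2}|\cdot|^2$ after the time/space rescaling encoded in \eqref{eq:space_time_transformation_HJB}, and the map $\alpha\mapsto\alpha/(1+T\alpha)$ and the $(1+T\alpha)^{-2}$ dilation of $f_L$ are exactly as you describe. The iteration and the role of $F,G$ are also the right shape.

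The problem is your second leg, and you have in fact put your finger on it yourself: there is no ``companion semiconcavity propagation'' in Section \ref{sec:propagation}, and the coupling-by-reflection argument cannot produce one. The It\^o computation in the proof of Theorem \ref{thm:propagation} shows that $\cU_t(X_t,\hat X_t)$ is a \emph{supermartingale} because the cross term $-r_t^{-1}|\mathrm{proj}_{\rme_t^\perp}(\cdots)|^2$ has a definite (nonpositive) sign, and only yields a lower bound $\bbE[\Gamma_0]\geq\bbE[\Gamma_T]$. To transport an upper bound $\ell_{\varphi}\leq\beta+r^{-1}f_L(r)/c^2$ backward to $\tilde\psi_0$ you would need a submartingale estimate for $\cU_t-$(something), which this sign structure does not give, and the invariant class $\cF_L$ is intrinsically one-sided. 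So the step you flag as the ``main obstacle'' is not merely an obstacle, it is a genuine gap: the proposed route does not close.

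What the paper actually does for that leg (Lemma \ref{lem:convexity_propagation}) is a \emph{static}, not dynamic, argument: starting from the Hessian--covariance identity $\nabla^2\bar\psi(y)=\frac1T\mathrm{Cov}_{X\sim\hat\pi^y}(X)$ from \eqref{eq:convexity_propagation_4}, it bounds the conditional variances of $\hat\pi^y$ from below by a one-dimensional integration by parts (a Cram\'er--Rao/Brascamp--Lieb type estimate) combined with Jensen's inequality and the concavity of $F(\alpha,\cdot)$, using the semiconcavity of $\varphi$ only through the bound $\ell_{V^{\hat\pi^y}}(r)\leq r^{-2}F(\alpha,r^2)$ on the conditional potential. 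This is also where the iteration is seeded: the crude bound \eqref{eq:crude_bound}, $\kappa_\psi(r)\geq\alpha_\nu-1/T-r^{-1}f_L(r)$, comes for free from the nonnegativity of the covariance in \eqref{eq:convexity_propagation_4}, and the monotone bounded sequence $\alpha^n$ built from $\alpha^0=\alpha_\nu-1/T$ and $G(\cdot,2)$ then converges to the smallest fixed point. Your proposal omits this starting point and replaces the variance argument with a propagation result that does not exist; you would need to supply the integration-by-parts/Jensen estimate (or an equivalent) to make the proof go through.
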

{
There seems to be no closed form expression for the solutions of the fixed point equation \eqref{eq:fixed_point}. However, it is possible to obtain explicit non trivial upper and lower bounds.
\begin{cor}\label{cor:rough}
Let $\bar\alpha$ be a fixed point solution of \eqref{eq:fixed_point}. Then we have
\bes
\frac{\alpha_{\nu}}{2}-\frac1T -\frac{L}{2T^2\alpha_{\nu}\beta_{\mu}}+\frac12 \sqrt{\big(\alpha_{\nu} +\frac{L}{T^2\beta_{\mu}\alpha_{\nu}}\big)^2+\frac{4\alpha_{\nu}}{
T^2\beta_{\mu}}} \leq \bar\alpha \leq \frac{\alpha_{\nu}}{2}-\frac1T+\frac12\sqrt{\alpha^2_{\nu}+\frac{4\alpha_{\nu}}{
T^2\beta_{\mu}}}.
\ees
\end{cor}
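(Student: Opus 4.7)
The plan is to translate the fixed-point equation into an algebraic condition in the auxiliary variable $x:=1+T\bar\alpha$. Since any fixed point satisfies $\bar\alpha>\alpha_\nu-1/T$, one has $s_\star:=G(\bar\alpha,2)=2T^2(\bar\alpha-\alpha_\nu+1/T)>0$; as $s\mapsto F(\bar\alpha,s)$ is continuous, strictly increasing, vanishes at zero and diverges to infinity, the definition of $G$ forces $F(\bar\alpha,s_\star)=2$. In the new variable, $x>T\alpha_\nu>0$ and $s_\star=2T(x-T\alpha_\nu)$, and both bounds will be obtained as algebraic inequalities in $x$ translated back via $\bar\alpha=(x-1)/T$.

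For the upper bound, I would drop the nonnegative $f_L$-term from $F(\bar\alpha,s_\star)=2$ to get $\beta_\mu s_\star+s_\star/(Tx)\leq 2$. Substituting $s_\star=2T(x-T\alpha_\nu)$ and clearing denominators reduces this to the quadratic inequality $x^2-T\alpha_\nu x-\alpha_\nu/\beta_\mu\leq 0$; solving by the quadratic formula and using $\bar\alpha=(x-1)/T$ recovers the upper bound exactly.

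For the lower bound, I would first use $\tanh u\leq u$ to get $f_L(r)\leq Lr$, hence $s_\star^{1/2}f_L(s_\star^{1/2})\leq Ls_\star$. Plugging this into $F(\bar\alpha,s_\star)=2$ produces $\beta_\mu s_\star+s_\star/(Tx)+Ls_\star/x^2\geq 2$, which after clearing denominators becomes the cubic inequality
\[
P(x):=\beta_\mu x^3-T\alpha_\nu\beta_\mu x^2+(L-\alpha_\nu)x-T\alpha_\nu L\geq 0.
\]
The crux of the argument is then the factorization identity
\[
x\,Q(x)=T\alpha_\nu\,P(x)+L(x-T\alpha_\nu)^2,\qquad Q(x):=T\alpha_\nu\beta_\mu x^2+(L-T^2\alpha_\nu^2\beta_\mu)x-T\alpha_\nu(\alpha_\nu+L),
\]
which can be checked by a direct expansion. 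Since $P(x)\geq 0$, $L(x-T\alpha_\nu)^2\geq 0$ and $x>0$, this yields $Q(x)\geq 0$; the constant term of $Q$ is strictly negative, so $Q$ has exactly one positive root $x^+$, and $Q(x)\geq 0$ with $x>0$ forces $x\geq x^+$. Computing $x^+$ by the quadratic formula (the discriminant collapses to $(L+T^2\alpha_\nu^2\beta_\mu)^2+4T^2\alpha_\nu^3\beta_\mu$) and substituting $\bar\alpha=(x^+-1)/T$ reproduces precisely the displayed lower bound.

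I expect the main obstacle to be spotting the factorization identity that reduces the cubic $P(x)\geq 0$ to the quadratic inequality $Q(x)\geq 0$: the fixed-point equation naturally produces a cubic with no clean closed-form roots, so without this algebraic trick one would be stuck with a bound that cannot be written in closed form, whereas with it all the remaining work is elementary algebra.
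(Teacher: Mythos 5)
Your proof is correct and gives the stated bounds. The upper bound is handled essentially as in the paper: drop the nonnegative $f_L$-term, use the fixed point relation to produce a quadratic inequality, and solve. Your change of variable $x=1+T\bar\alpha$ is the same as the paper's $\bar a=\bar\alpha+1/T$ up to a factor of $T$.

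For the lower bound your route is genuinely different, and arguably cleaner. The paper also begins with $f_L(r)\leq Lr$, but then immediately substitutes the crude bound $1+T\bar\alpha\geq T\alpha_\nu$ into one of the two factors of $(1+T\bar\alpha)^2$ in the $L$-term's denominator (replacing $\tfrac{L}{(1+T\bar\alpha)^2}$ by $\tfrac{L}{T\alpha_\nu(1+T\bar\alpha)}$); this pre-linearization turns what would be a cubic directly into the quadratic $T^2\beta_\mu\bar a^2+(L/\alpha_\nu-T^2\alpha_\nu\beta_\mu)\bar a-(\alpha_\nu+L)\geq0$. You instead keep the exact cubic $P(x)\geq0$ and observe the factorization identity $xQ(x)=T\alpha_\nu P(x)+L(x-T\alpha_\nu)^2$, which reduces the cubic to the quadratic $Q(x)\geq0$ using only nonnegativity of a square, with no auxiliary inequality. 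Up to the positive factor $T\alpha_\nu$ your $Q(x)$ (written in $x=T\bar a$) is the same quadratic as the paper's, so the final bound coincides. One concrete advantage of your derivation: solving the quadratic carefully yields the correct sign $-L/(2T^2\alpha_\nu\beta_\mu)$ in the constant part of the root, matching the corollary's statement, whereas the paper's final display in the proof contains a sign typo ($+$ instead of $-$ in that term). Your factorization trick would also be the natural thing to reach for if one wanted to sharpen the lower bound by keeping more of the true dependence on $(1+T\bar\alpha)$ rather than linearizing it away.
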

}
\begin{remark}
It is proven at Lemma \ref{lem:properties_of_G} that $F(\alpha,\cdot)$ is increasing on $(0,+\infty)$ for all $\alpha>-1/T$. $G(\alpha,\cdot)$ is therefore its inverse.
\end{remark}
%\begin{remark}
%We conjecture that $\alpha_{\psi}$ can be taken to be the largest solution of the fixed point equation \eqref{eq:fixed_point}. To prove so, it would suffice to show that Sinkhorn's iterates (see \cite[Sec 6]{nutz2022entropic}) converge to solutions of the Schr\"odinger system
% under Assumption \ref{ass:marginals} for a large set of initial conditions. We could not find such result in the existing literature.
%\end{remark}
\begin{remark}
    It is possible to check that if $\textbf{(H2)}$ holds with $L=0$, Theorem \ref{thm:semiconvexity_estimate_Schr_pot} recovers the conclusion of \cite[Theorem 4]{chewi2022entropic},after a change of variable. To be more precise, the potentials $(\varphi_{\varepsilon},\psi_{\varepsilon})$ considered there are related to the couple $(\varphi,\psi)$ appearing in \eqref{eq:schr_syst}  by choosing $\varepsilon=T$ and setting 
    \bes
    \varphi_{\varepsilon} = \varepsilon\Big(\varphi-U^{\mu}+\frac{|\cdot|^2}{2\varepsilon} \Big), \quad  \psi_{\varepsilon} = \varepsilon\Big(\psi-U^{\nu}+\frac{|\cdot|^2}{2\varepsilon} \Big).
    \ees
\end{remark}
\begin{remark}
The rescaled potential $T\varphi$ converges to the Brenier potential in the small noise limit \cite{nutz2022entropic}. As explained in the introduction, one cannot deduce from Theorem \ref{thm:semiconvexity_estimate_Schr_pot} an improvement over Caffarelli's Theorem \cite{caffarelli2000monotonicity} by letting $T\rightarrow0$ in Theorem \ref{thm:semiconvexity_estimate_Schr_pot}.
\end{remark}
Our second main result is that the static Schr\"odinger bridge $\hat{\pi}$ satisfies LSI with an explicit constant. We recall here that a probability measure $\rho$ on $\bbR^d$ satisfies LSI with constant $C$ if and only if for all positive differentiable function $f$
\bes
\mathrm{Ent}_{\rho}(f) \leq \frac{C}{2}\int \frac{|\nabla f|^2}{f}\De \rho, \quad \text{where} \quad \mathrm{Ent}_{\rho}(f) = \int f\log f\De \rho - \int f \De \rho \, \log \Big( \int f \De \rho\Big).
\ees
\begin{theorem}\label{thm:LSI}
Let Assumption \ref{ass:marginals} hold and assume furthermore that $\mu$ satisfies LSI with constant $C_{\mu}$. Then the static Schr\"odinger bridge $\hat{\pi}$ satisfies LSI with constant 
\bes
\max\left\{{2}\,C_{\mu},{2}\,C_{\mu}C_{0,T}+ \int_0^T C_{t,T}\,\De t  \right\},
\ees
where for all $t\leq T$
\bes
    C_{t,T}:=\exp\Big(-\int_{t}^T\alpha^{\psi}_s\De s\Big), \quad \alpha^{\psi}_t := \frac{\alpha_{\psi}}{1+(T-t)\alpha_{\psi}}-  \frac{L}{(1+(T-t)\alpha_{\psi})^2},
\ees
and $\alpha_{\psi}$ is as in Theorem \ref{thm:semiconvexity_estimate_Schr_pot}.
\end{theorem}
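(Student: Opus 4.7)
The plan is to leverage the Doob $h$-transform representation of $\hat\pi$ as the joint law of $(X_0,X_T)$ along the characteristic SDE $\De X_t=-\nabla\varphi_t(X_t)\De t+\De B_t$ with $X_0\sim\mu$, where $\varphi_t$ solves the HJB equation with terminal datum $\varphi_T=\psi$. The analytic engine of the proof is the time-dependent pointwise gradient estimate
\bes
|\nabla Q_{t,T}g|(y)\le C_{t,T}\,Q_{t,T}|\nabla g|(y),\qquad 0\le t\le T,
\ees
for the $h$-transformed semigroup $(Q_{s,t})$, together with its byproduct, a local logarithmic Sobolev inequality for the conditional law $\hat\pi(\cdot\mid x)$. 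Both are obtained by backward-propagating the weak semiconvexity bound \eqref{semiconvexity_estimate_Schr_pot} along the HJB flow via Theorem \ref{thm:propagation} and then running a coupling-by-reflection argument on the characteristics, which selects precisely the time-dependent effective curvature $\alpha^\psi_t$ entering the exponential contraction $C_{t,T}=\exp(-\int_t^T\alpha^\psi_s\De s)$.

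Given a positive smooth $f:\bbRD\times\bbRD\to\bbR$, I would start from the additivity of entropy
\bes
\mathrm{Ent}_{\hat\pi}(f)=\mathrm{Ent}_\mu(F_0)+\int \mathrm{Ent}_{\hat\pi(\cdot|x)}\big(f(x,\cdot)\big)\,\mu(\De x),\qquad F_0(x):=Q_{0,T}f(x,\cdot)(x),
\ees
and handle the two summands separately. For the conditional entropy, setting $F_t:=Q_{t,T}f(X_0,\cdot)(X_t)$ I would apply It\^o's formula to $F_t\log F_t$ and exploit that $(F_t)_{t\in[0,T]}$ is a closed martingale (with $F_T=f(X_0,X_T)$) to derive the dissipation identity
\bes
\int \mathrm{Ent}_{\hat\pi(\cdot|x)}\big(f(x,\cdot)\big)\,\mu(\De x)=\tfrac12\,\bbE\Big[\int_0^T \frac{|\nabla_y Q_{t,T}f(X_0,\cdot)(X_t)|^2}{Q_{t,T}f(X_0,\cdot)(X_t)}\,\De t\Big].
\ees
Plugging in the gradient estimate, applying Cauchy--Schwarz against the transition kernel, and then using the tower property bounds this expression by $\tfrac12\int_0^T C_{t,T}\,\De t\,\cdot\int|\nabla_y f|^2/f\,\De\hat\pi$.

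For $\mathrm{Ent}_\mu(F_0)$, I would differentiate under the semigroup to split $\nabla F_0(x)=Q_{0,T}(\nabla_x f(x,\cdot))(x)+(\nabla_y Q_{0,T}f(x,\cdot))(x)$, bound the first summand by Cauchy--Schwarz against $\hat\pi(\cdot|x)$, and control the second by the gradient estimate at $t=0$, which contributes the factor $C_{0,T}$. Combining the LSI for $\mu$ with $(a+b)^2\le 2(a^2+b^2)$ and the tower property then gives
\bes
\mathrm{Ent}_\mu(F_0)\le C_\mu\int |\nabla_x f|^2/f\,\De\hat\pi+C_\mu C_{0,T}\int |\nabla_y f|^2/f\,\De\hat\pi,
\ees
and summing the two contributions and taking the maximum of the coefficients of $|\nabla_x f|^2/f$ and $|\nabla_y f|^2/f$ produces the announced LSI constant $\max\{2C_\mu,\,2C_\mu C_{0,T}+\int_0^T C_{t,T}\De t\}$.

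The principal obstacle is the derivation of the sharp gradient estimate with contraction rate $C_{t,T}$ along the characteristic flow. The static bound \eqref{semiconvexity_estimate_Schr_pot} yields weak semiconvexity only for the terminal potential $\psi$, and its non-uniform corrector $r^{-1}f_L(r)$ rules out a direct application of the Bakry--\'Emery criterion. Propagating the bound backward through the HJB equation via Theorem \ref{thm:propagation} and carrying out a coupling-by-reflection analysis that handles the hyperbolic-tangent corrector quantitatively is what selects the effective curvature $\alpha^\psi_t=\alpha_\psi/(1+(T-t)\alpha_\psi)-L/(1+(T-t)\alpha_\psi)^2$ and its integrated counterpart $C_{t,T}$. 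Once this is in place, the remaining entropy and gradient manipulations are conceptually standard.
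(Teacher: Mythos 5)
Your overall architecture matches the paper's: represent $\hat\pi$ as the two-time marginal of a Doob $h$-transformed diffusion (drift $-\nabla\hjb{T}{\psi}{t}$), decompose the entropy into a marginal part and a conditional part, bound the conditional part via a local (conditional) LSI, bound the marginal part via $\mu$'s LSI plus the semigroup gradient estimate, and combine. Your entropy-dissipation identity for the conditional entropy is the standard derivation of the local LSI that the paper cites from Bakry--Gentil--Ledoux, and your Step 2 split of $\nabla F_0$ into a $\nabla_x$-piece and a $\nabla_y$-piece reproduces Step 2 of the paper's Lemma \ref{lem:mixing} exactly. So far so good.

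There is, however, a genuine gap in your description of how the effective curvature $\alpha^{\psi}_t$ and the contraction $C_{t,T}$ are obtained, and it is not just a matter of bookkeeping. You propose a coupling-by-reflection argument ``that handles the hyperbolic-tangent corrector quantitatively'' to derive the pointwise commutation estimate $|\nabla Q_{t,T}g|\leq C_{t,T}Q_{t,T}|\nabla g|$. This is not what the paper does, and it would not deliver the estimate in the required form: reflection coupling with the concave distortion $f_L$ yields Wasserstein contraction in a distorted metric, i.e.\ Lipschitz-to-Lipschitz bounds for the semigroup, not the pointwise gradient commutation needed to feed into the local LSI. The paper's route is much simpler and crucially relies on a structural feature of the weak-semiconvexity class: after Lemma \ref{lem:concavity_propagation} propagates the bound
$\kappa_{\hjb{T}{\psi}{t}}(r)\geq \alpha_{\psi}/(1+(T-t)\alpha_{\psi}) - r^{-1}f_L(r)/(1+(T-t)\alpha_{\psi})^2$
along the flow, one takes $r\rightarrow 0$ (equivalently, majorizes $r^{-1}f_L(r)\leq f_L'(0)=L$ by concavity) to extract a \emph{uniform pointwise} Hessian lower bound $\nabla^2\hjb{T}{\psi}{t}\geq\alpha^{\psi}_t\,\mathrm{Id}$. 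At that point the standard Bakry--\'Emery (synchronous-coupling) gradient estimate \eqref{eq:grad_est} applies off the shelf. Reflection coupling appears only earlier, inside Theorem \ref{thm:propagation}, to propagate weak semiconvexity of the potential --- it is not used to prove the semigroup gradient estimate. Your description collapses these two steps and would leave you without the pointwise commutation estimate.

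A second, more routine omission: the drift $-\nabla\hjb{T}{\psi}{t}$ is not uniformly Lipschitz up to $t=T$ (its Hessian is only bounded above by $(T-t)^{-1}$), so the paper does not apply the mixing lemma directly to $\hat\pi$. Instead it represents the regularized coupling $\hat\pi^{\varepsilon}$ as the joint law at times $0$ and $T-\varepsilon$ (Lemma \ref{lem:doob}), applies Lemma \ref{lem:mixing} on $[0,T-\varepsilon]$, and then passes to the limit $\varepsilon\rightarrow 0$ using total-variation convergence of $\hat\pi^{\varepsilon}$ to $\hat\pi$ and a cutoff approximation for general test functions. Your outline skips this regularization, which is needed to make the Doob representation and the Girsanov computation legitimate.
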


It is well known that LSI has a number of remarkable consequences including, but certainly not limited to, spectral gaps and concentration of measure inequalities for Lipschitz observables.

    \begin{remark}
    { It is worth noticing that if $U^{\nu}$ is the sum of a strongly convex potential and a Lipschitz perturbation with second derivative bounded below, then \eqref{eq:nu_ass} holds. Moreover, the perturbation needs not to be of bounded support, covering many interesting scenarios as double wells or multiple-wells potentials. At the moment of writing, it is not clear whether or not \eqref{eq:nu_ass} implies that $\nu$ is a bounded or log-Lipschitz perturbation of a strongly log-concave probability measure, a situation where the results of \cite{holley1986logarithmic}\cite{aida1994logarithmicbis} already ensure that $\nu$ satisfies a logarithmic Sobolev inequality.}
\end{remark}

\begin{remark}
    By taking $\mu$ to be a Gaussian distribution, we obtain as a corollary of Theorem \ref{thm:LSI} that any probability $\nu$ fulfilling \eqref{eq:nu_ass} satisfies a logarithmic Sobolev inequality, { though the constant we exhibit here is not optimal. Indeed, the LSI constant for $\nu$ is deduced by marginalization from the LSI constant of $\hat\pi$. Obviously, estimating the LSI constant for $\hat\pi$ is a much more difficult task than estimating the LSI constant of its marginal in particular because $\hat\pi$ does not admit an explicit expression. However, looking at limiting cases, Schr\"odinger potentials become explicit, and the LSI constant for $\nu$ can be more precisely estimated by constructing Lipschitz maps between some nice distribution and $\nu$ arguing on the basis of Theorem \ref{thm:propagation}. In particular, setting $T=1$ and choosing $\mu=\delta_0$ allows to recover the setting in which the "Brownian transport map" \cite{mikulincer2021brownian} is constructed. Changing the reference measure into the stationary Ornstein-Uhlenbeck process, choosing $\mu$ to be the standard Gaussian distribution and setting $T=+\infty$ allows to deploy the technique of heat flow maps \cite{mikulincer2022lipschitz}. These limiting scenarios are in some sense orthogonal to the scope of this work, that is to gain a better understanding on potentials when they cannot be computed in closed form. They are nevertheless of clear interest and will be analyzed in detail in forthcoming work. }
\end{remark}
    %%%%%%%%%%%%%%%%%%%%%%%%%%%%%%%%%%%%%%%%%%%%%%%%%%%%%%%%%%%%%%%%%%%%%%%%%%%%%%%%%%%%

\section{Invariant sets of weakly convex 
functions for the HJB flow}\label{sec:propagation}
We introduce the notation
\be\label{eq:HJB_FK}
\hjb{T}{g}{t}(x):=-\log P_{T-t}\exp(-g)(x) = -\log \left( \frac{1}{(2\pi (T-t))^{d/2}}\int \exp\Big(-\frac{|y-x|^2}{2(T-t)}-g(y)\Big) \De y\right).
\ee
With this notation at hand, \eqref{eq:schr_syst} rewrites as follows:
\be\label{eq:schr_syst_HJB}
\begin{cases}
\varphi =  U^{\mu} - \hjb{T}{\psi}{0},\\
\psi =  U^{\nu} -\hjb{T}{\varphi}{0}.
\end{cases}
\ee
It is well known that under mild conditions on $g$, the map $[0,T]\times\bbRD\ni(t,x)\mapsto \hjb{T}{g}{t}(x)$ is a classical solution of the HJB equation
 \be\label{eq:HJB}
  \begin{cases} 
  \partial_{t}\varphi_t(x)+\frac{1}{2}\Delta \varphi_t(x) -\frac12|\nabla \varphi_t|^2(x)=0, \\
  \varphi_T(x)=g(x).
  \end{cases}
 \ee
 In the next theorem, we construct  for any $L>0$ a set of weakly convex functions $\cF_L$ that is shown to be invariant for the HJB flow. In the proof, and in the rest of the paper we shall denote by $[\cdot,\cdot]$ the quadratic covariation of two It\^o processes.
 
\begin{theorem}\label{thm:propagation}
Fix $L>0$ and define 
\bes
\mathcal{F}_L=\{g\in C^{1}(\bbRD): \kappa_{g}(r) \geq - r^{-1}f_L(r) \quad \forall r>0\}.
\ees
Then for all $0\leq t\leq T<+\infty$ we have
\be\label{eq:propagation_5}
g\in\cF_{L}\Rightarrow\hjb{T}{g}{t}\in\cF_{L}.
\ee
\end{theorem}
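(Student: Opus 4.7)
The plan is to fix $g \in \cF_L$, $t \in [0, T)$ and two distinct points $x, y \in \bbRD$. Set $\varphi_s := \hjb{T}{g}{s}$ and $r_0 := |x-y|$; the goal is
$$\langle \nabla \varphi_t(x) - \nabla \varphi_t(y), x-y\rangle \geq -r_0 f_L(r_0),$$
which is exactly $\varphi_t \in \cF_L$. I would construct on $[t, T]$ a pair of reflection-coupled characteristics
$$\De X_s = -\nabla\varphi_s(X_s)\De s + \De B_s, \qquad \De Y_s = -\nabla\varphi_s(Y_s)\De s + (I - 2e_s e_s^\top)\De B_s,$$
with $e_s := (X_s - Y_s)/|X_s - Y_s|$, for $s$ before the coupling time $\tau := \inf\{s : X_s = Y_s\}$, and introduce $Z_s := X_s - Y_s$, $r_s := |Z_s|$, $M_s := \nabla\varphi_s(X_s) - \nabla\varphi_s(Y_s)$ and the scalar $\psi_s := -\langle e_s, M_s\rangle$.

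The HJB equation \eqref{eq:HJB} combined with It\^o's formula implies that $\nabla\varphi_s(X_s)$ and $\nabla\varphi_s(Y_s)$ are local martingales; hence so is $M_s$. Because the reflection noise of $Z_s$ lies entirely in the $e$-direction, $e_s$ itself turns out to be of bounded variation with $\De e_s = -M_s^\perp/r_s\,\De s$ (where $M_s^\perp := M_s - \langle e_s, M_s\rangle e_s$), and straightforward computations then yield
$$\De r_s = \psi_s\De s + 2\De B'_s, \qquad \De\psi_s = \frac{|M_s^\perp|^2}{r_s}\De s + \De N_s,$$
where $B'_s := \int_t^s e_u^\top \De B_u$ is a one-dimensional Brownian motion and $N$ is a local martingale. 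In particular $\psi_s$ is a local submartingale.

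The algebraic miracle at the heart of the proof is that $f_L$ satisfies the Riccati ODE $f_L'(r) = L - f_L(r)^2/4$, whence $f_L''(r) = -f_L(r)f_L'(r)/2$. Applying It\^o to $f_L(r_s)$ and using these identities reduces its drift to $f_L'(r_s)\psi_s + 2f_L''(r_s) = f_L'(r_s)\bigl(\psi_s - f_L(r_s)\bigr)$, so that $G_s := f_L(r_s) - \psi_s$ obeys
$$\De G_s = \Bigl(-f_L'(r_s)\,G_s - \frac{|M_s^\perp|^2}{r_s}\Bigr)\De s + \De\widetilde N_s$$
for some local martingale $\widetilde N$. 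Since $f_L' \leq L$, multiplying by the bounded integrating factor $I_s := \exp\bigl(\int_t^s f_L'(r_u)\De u\bigr) \leq e^{L(T-t)}$ turns $(I_s G_s)_{s \in [t, T]}$ into a local supermartingale with drift $-I_s |M_s^\perp|^2/r_s \leq 0$. At the terminal time, the assumption $g \in \cF_L$ applied to $\langle \nabla g(X_T) - \nabla g(Y_T), Z_T\rangle$ and projected on $e_T$ gives $\psi_T \leq f_L(r_T)$, i.e.\ $G_T \geq 0$ almost surely. A standard localization (up to $\tau \wedge \sigma_n$ for exit times $\sigma_n$ from compact subsets of $(0, \infty)$) then yields $G_t \geq \bbE[I_T G_T \mid \mathcal{F}_t] \geq 0$, which unravels to the desired inequality.

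The hard step is the one where the Riccati identity is invoked: without it, the drift of $G_s$ would not close linearly in $G_s$ itself and no integrating factor would render it monotone. This is precisely why the tanh-shaped profile $f_L$ is singled out --- it is calibrated to the second-order behaviour of the reflection-coupled radial process. The remaining technicalities (existence of the coupling up to $\tau$, enough regularity of $\varphi_s$ to apply It\^o twice to $\nabla\varphi_s(X_s)$ through $\nabla^2\varphi_s$, and the localization argument near $\tau$ and at $r_s = 0$) are standard within the reflection-coupling framework and should not pose serious difficulty.
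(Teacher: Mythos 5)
Your proposal is correct and follows the same reflection-coupling strategy as the paper: it uses the martingality of $\nabla U^{T,g}_s(X_s)$ coming from the HJB equation, the bounded-variation evolution $\De e_s=-r_s^{-1}M_s^\perp\De s$, the supermartingale obtained by applying It\^o to $f_L(r_s)$ plus the radial projection of the gradient difference, and the integrating factor $\exp(\int f_L'(r_u)\De u)$, then closes the loop with $g\in\cF_L$ at the terminal time. The Riccati form $f_L'=L-f_L^2/4$ you invoke is just the first integral of the paper's ODE $ff'+2f''=0$ with $f(0)=0$, $f'(0)=L$, so the "algebraic miracle" is identical, up to the sign convention $\psi_s=-\cU_s(X_s,\hat X_s)$.
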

{ The fact that convexity of the terminal condition in the HJB equation \eqref{eq:HJB} implies convexity of the solution at all times is equivalent to the fact that the heat flow preserves log-concavity and has been known for a long time, see \cite{BrLieb76}. Theorem \ref{thm:propagation} offers a significant generalization of this property, by showing that there exist weaker properties than pointwise convexity that are transferred from the terminal condition to the solutions of the HJB equation. It can be checked that $f_{L}$ solves the ODE
\be\label{eq:ODE_f}
ff'(r)+2f''(r)=0 \quad \forall r>0, \quad f(0)=0, f'(0)=L.
\ee
To verify the above, it suffices to compute
\bes
f_L'(r) = \frac{L}{\cosh^2(rL^{1/2}/2)}, \quad f''_L(r)=-L^{3/2}\frac{\sinh(rL^{1/2}/2)}{\cosh^3(rL^{1/2}/2)}
\ees
Moreover, we recall here some useful properties of $f_L$:
\be\label{eq:basic properties}
f_L(r)>0,\, f'_L(r)>0,\, f''_L(r)<0,\, f_L(r)\geq rf'_L(r) \quad \forall r>0.
\ee
The condition $ff'(r)+2f''(r)\leq 0$ appears naturally in the main coupling argument of Theorem \ref{thm:propagation} and we have defined the functions $f_L$ \textit{ad hoc} in order to saturate this differential inequality.
}
We are now in position to prove Theorem \ref{thm:propagation}. As anticipated above, the proof relies on the analysis of coupling by reflection along the characteristics of the HJB equation. {In doing so, we heavily rely on a connection with stochastic control. More precisely, the HJB characteristic
\begin{equation*}
    \De X_t=-\nabla \hjb{T}{g}{t}(X_t)\De t+\De B_t, \quad X_0=x,
\end{equation*}
is the optimal process for the stochastic control problem
\bes
\begin{split}
\inf_{(u_s)_{s\in[0,T]}}\,\, &\bbE\Big[\int_{0}^T\frac12|u_s|^2\De s +g(X^{u,x}_T) \Big] \\
        &\text{s.t} \quad \De X^{u,x}_s= u_s \De s +\De B_s, \quad X^{u,x}_0=x.
\end{split}
\ees
In particular, the stochastic maximum principle \cite{Peng90} for this control problem grants that the process $(\nabla \hjb{T}{g}{t}(X_t))_{t\in[0,T]}$ is a martingale, and we will use this fact in the proof of Theorem \ref{thm:propagation} giving a self contained proof for the reader's convenience.}
In the recent article \cite[Thm 1.3]{conforti2022coupling} Hessian bounds for HJB equations originating from stochastic control problems are obtained by means of coupling techniques. These are two-sided bounds that require an a priori knowledge of global Lipschitz bounds on solutions of the HJB equation  to hold. The one-sided estimates of Theorem \ref{thm:propagation} do not require any Lipschitz property of solutions and their proof require finer arguments than those used in \cite{conforti2022coupling}. 
\begin{proof}
We first assume w.l.o.g. that $t=0$ and work under the additional assumption that 
\be\label{eq:propagation_4}
g\in C^3(\bbRD), \quad \sup_{x\in\bbRD}|\nabla^2 g|(x)<+\infty.
\ee
Combining the above with $g\in\cF_L$, we can justify differentiation under the integral sign in \eqref{eq:HJB_FK} and establish that
\bes
 [0,T]\times\bbRD\ni(t,x)\mapsto U^{T,g}_t(x)
\ees 
is a classical solution of \eqref{eq:HJB} such that
\bes 
[0,T]\times\bbRD\ni(t,x)\mapsto \nabla U^{T,g}_t(x)
\ees
 is continuously differentiable in $t$ as well as twice continuously differentiable and uniformly Lipschitz in $x$.
Under these regularity assumptions, for given $x,\hat{x}\in\bbRD$, coupling by reflection of two diffusions started at $x$ and $\hat{x}$ respectively and whose drift field is $-\nabla\hjb{T}{g}{t}$ is well defined, see \cite{eberle2016reflection}. That is to say, there exist a stochastic  process $(X_t,\hat{X}_t)_{0\leq t \leq T}$ with $(X_0,\hat{X}_0)=(x,\hat{x})$ and two Brownian motions $(B_t,\hat{B}_t)_{0\leq t\leq T}$ all defined on the same probability space and such that
\bes
\begin{cases}
\De X_t = -\nabla \hjb{T}{g}{t}(X_t)\De t+\De B_t, \quad & \mbox{for $0\leq t\leq T$,}\\
\De \hat{X}_{t} = -\nabla \hjb{T}{g}{t}(\hat{X}_t)\De t+\De \hat{B}_t,\quad & \mbox{for $0\leq t\leq \tau$, $X_t=\hat{X}_t$ for $t>\tau$,} \\
\end{cases}
\ees
where
\bes
\rme_t = r^{-1}_t (X_t-\hat{X}_t), \quad r_t = |X_t-\hat{X}_t|,  \quad \De \hat{B}_t = \De B_t -2\rme_t \langle\rme_t,\De B_t\rangle
\ees
and 
\bes
\tau = \inf\{t \in [0,T]: X_{t}=\hat{X}_t \}\wedge T.
\ees
{In particular, $(\hat{B}_t)_{0\leq t\leq T}$ is a Brownian motion by Lévy's characterization.}
We now define 
\bes
\cU:[0,T]\times\bbRD\times\bbRD\longrightarrow \bbR, \quad \cU_{t}(x,\hat{x}) =\begin{cases} |x-\hat{x}|^{-1}\langle \nabla \hjb{T}{g}{t}(x)- \nabla \hjb{T}{g}{t}(\hat{x}),x-\hat{x}\rangle, \quad &\mbox{if $x\neq\hat{x}$,}\\
0 \quad &\mbox{if $x=\hat{x}$,}
\end{cases}
\ees
and proceed to prove that $(\cU_t(X_t,\hat{X}_t))_{0\leq t\leq T}$ is a supermartingale. To this aim, we first deduce from \eqref{eq:HJB} and It\^o's formula that 
\be\label{eq:propagation_2}
 \De \nabla \hjb{T}{g}{t}(X_t) = \De M_t, \quad \De \nabla \hjb{T}{g}{t}(\hat{X}_t) = \De \hat{M}_t
\ee
where $M_{\cdot},\hat{M}_{\cdot}$ are square integrable martingales. Indeed we find from It\^o's formula
\bes
\begin{split}
\De \nabla \hjb{T}{g}{t}(X_t) &= \Big(\partial_t\nabla \hjb{T}{g}{t}(X_t) -\nabla^2 \hjb{T}{g}{t} \nabla \hjb{T}{g}{t}(X_t) +\frac{1}{2}\Delta \nabla \hjb{T}{g}{t}(X_t) \Big)\De t + \nabla^2 \hjb{T}{g}{t}(X_t)\cdot \De B_t\\
&\stackrel{\eqref{eq:HJB}}{=}\nabla^2 \hjb{T}{g}{t} (X_t)\cdot \De B_t,
\end{split}
\ees
and a completely analogous argument shows that $ \nabla \hjb{T}{g}{t}(\hat{X}_t) $ is a square integrable martingale. We shall also prove separately at Lemma \ref{lem:direction} that
\be\label{eq:propagation_1}
\De \rme_t = - r^{-1}_t\mathrm{proj}_{\rme^{\bot}_t}(\nabla\hjb{T}{g}{t}(X_t)-\nabla\hjb{T}{g}{t}(\hat{X}_t))\De t \quad \forall t <\tau,
\ee
where $\mathrm{proj}_{\rme^{\bot}_t}$ denotes the orthogonal projection on the orthogonal complement of the linear subspace generated by $\rme_t$. Combining together \eqref{eq:propagation_2} and \eqref{eq:propagation_1} we find that $\De\cU_t(X_t,\hat{X}_t)=0$ for $t\geq \tau$, whereas for $t<\tau$
\bes
\begin{split}
\De \cU_t(X_t,\hat{X}_t)&=\langle \nabla \hjb{T}{g}{t}(X_t)- \nabla \hjb{T}{g}{t}(\hat{X}_t),\De \rme_t\rangle \\
&+\langle \rme_t, \De(\nabla \hjb{T}{g}{t}(X_t)- \nabla \hjb{T}{g}{t}(\hat{X}_t))\rangle +\De [(\nabla \hjb{T}{g}{\cdot}(X_{\cdot})- \nabla \hjb{T}{g}{\cdot}(\hat{X}_{\cdot})),\rme_{\cdot}]_t\\
&\stackrel{\eqref{eq:propagation_2}+\eqref{eq:propagation_1}}{=}-r^{-1}_t|\mathrm{proj}_{\rme^{\bot}_t}(\nabla\hjb{T}{g}{t}(X_t)-\nabla\hjb{T}{g}{t}(\hat{X}_t))|^2\De t + \De \tilde{M}_t.
\end{split}
\ees 
proving that $(\cU_t(X_t,\hat{X}_t))_{0\leq t\leq T}$ is a supermartingale. In the above, $ \tilde{M}_{\cdot}$ denotes a square integrable martingale and to obtain the last equality we used that the quadratic variation term vanishes because of \eqref{eq:propagation_1}. Next, arguing exactly as in \cite[Eq. 60]{eberle2016reflection} (see also \eqref{eq:direction_3} below for more details) on the basis of It\^o's formula and invoking \eqref{eq:ODE_f} we get
\bes
\begin{split}
\De f_{L}(r_t) &= [-f'_{L}(r_t)\cU_t(X_t,\hat{X}_t) + 2 f''_{L}(r_t)]\De t + \De N_t\\
                &\stackrel{\eqref{eq:ODE_f}}{=}-f'_{L}(r_t)[\cU_t(X_t,\hat{X}_t)+f_{L}(r_t)]\De t +\De N_t,
\end{split}
\ees
where $N_{\cdot}$ is a square integrable martingale. It then follows that
\be\label{eq:propagation_3}
\De \big( \cU_t(X_t,\hat{X}_t)+f_{L}(r_t)\big) \leq -f'_{L}(r_t)\Big(\cU_{t}(X_t,\hat{X}_t)+f_{L}(r_t)\Big)\De t +\De N_t + \De \tilde{M}_t.
\ee
from which we deduce that the process
\bes
\Gamma_t=\exp\Big(\int_0^tf'_{L}(r_s)\De s\Big)\big( \cU_t(X_t,\hat{X}_t)+f_{L}(r_t)\big)
\ees
is a supermartingale and in particular is decreasing on average. This gives
%\bes
%\begin{split}
%\De \Big(\exp(\int_0^tf'_{L}(r_s)\De s)\big( \cU_t(X_t,\hat{X}_t)+f_{L}(r_t)\big)\Big)=&f'_{L}(r_t)\exp(\int_0^tf'_{L}(r_s)\De s)\big( \cU_t(X_t,\hat{X}_t)+f_{L}(r_t)\big)\De t\\
%&+\exp(\int_0^tf'_{L}(r_s)\De s)\De \big( \cU_t(X_t,\hat{X}_t)+f_{L}(r_t)\big)\\
%&\stackrel{\eqref{eq:propagation_3}}{\leq} \exp(\int_0^tf'_{L}(r_s)\De s)\De\tilde{M}_t.
%\end{split}
%\ees
\bes
\begin{split}
&|x-\hat{x}|^{-1}\langle\nabla\hjb{T}{g}{0}(x)-\nabla\hjb{T}{g}{0}(\hat{x}),x-\hat{x}\rangle+f_L(|x-\hat{x}|)= \bbE[\Gamma_0] \\
&\geq \bbE[\Gamma_T]\geq\bbE\Big[\exp\Big(\int_0^Tf'_{L}(r_s)\De s\Big)\big(|X_T-\hat{X}_T|\kappa_{g}(|X_T-\hat{X}_T|) +f_{L}(|X_T-\hat{X}_T|)\big)\Big]\geq 0,
\end{split}
\ees
where the last inequality follows from $g\in\cF_{L}$. We have thus completed the proof under the additional assumption \eqref{eq:propagation_4}. In order to remove it, consider any $g\in\cF_{L}$. Then there exist $(g^n)\subseteq\cF_{L}$  such that \eqref{eq:propagation_4} holds for any of the $g^n$,  $g^n\rightarrow g$ pointwise and $g^n$ is uniformly bounded below. From this, one can prove that $\nabla \hjb{g^n}{T}{0} \rightarrow \nabla \hjb{g}{T}{0}$ pointwise by differentiating \eqref{eq:HJB_FK} under the integral sign. Using this result in combination with the fact that \eqref{eq:propagation_5} holds for any $g^n$ allows to reach the desired conclusion. 
\end{proof}

\begin{lemma}\label{lem:direction}
Under the same assumptions and notations of Theorem \ref{thm:propagation} we have
\bes
\De \rme_t = - r^{-1}_t\mathrm{proj}_{\rme^{\bot}_t}(\nabla\hjb{T}{g}{t}(X_t)-\nabla\hjb{T}{g}{t}(\hat{X}_t))\De t \quad \forall t<\tau.
\ees
\end{lemma}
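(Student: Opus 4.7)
The plan is to derive an Itô expansion for $\rme_t = r_t^{-1}(X_t-\hat X_t)$ and observe that the reflected coupling is engineered precisely so that the Brownian part of $X_t-\hat X_t$ is purely radial, which kills every stochastic term in $\De \rme_t$ and leaves only the projection of the drift onto $\rme_t^{\bot}$.

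First, I would introduce the shorthand $Z_t=X_t-\hat X_t$ and $b_t = \nabla \hjb{T}{g}{t}(X_t)-\nabla \hjb{T}{g}{t}(\hat X_t)$. Subtracting the two SDEs and using the reflection rule $\De \hat B_t = \De B_t - 2\rme_t\langle \rme_t,\De B_t\rangle$, one obtains, for $t<\tau$,
\[
\De Z_t = -b_t\,\De t + 2\rme_t\langle \rme_t,\De B_t\rangle .
\]
The crucial feature is that the martingale part of $Z_t$ points along the unit vector $\rme_t$, and its quadratic covariation matrix is the rank one tensor $4\,\rme_t\rme_t^{\top}\De t$.

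Next I would compute $\De r_t$. Applying Itô to $|Z_t|^2$ and then to $r_t=\sqrt{|Z_t|^2}$, the two Itô corrections (one from $|Z|^2$, one from the square root) exactly cancel, yielding
\[
\De r_t = -\langle \rme_t,b_t\rangle\,\De t + 2\langle \rme_t,\De B_t\rangle .
\]
Then, applying Itô to the function $(z,r)\mapsto z/r$ evaluated at $(Z_t,r_t)$, the second order terms coming from $[Z^i,Z^j]$, $[r,r]$ and $[Z^i,r]$ (all of which are proportional to $\rme_t\rme_t^{\top}\De t$ or $\rme_t\De t$) combine and reduce to
\[
\De \rme_t = \frac{1}{r_t}\bigl(\De Z_t - \rme_t\,\De r_t\bigr).
\]
This is the key algebraic simplification: the Itô corrections produce a term $4\rme_t/r_t^{2}\De t$ that is canceled by $-\rme_t\cdot (4/r_t^{2})\De t$ coming from the $[Z^i,r]$ cross-variation.

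To conclude, I would substitute the expressions for $\De Z_t$ and $\De r_t$ into the previous identity. The martingale parts are both equal to $2\rme_t\langle \rme_t,\De B_t\rangle$ and cancel exactly, and the drift reduces to
\[
\De \rme_t = -\frac{1}{r_t}\bigl(b_t - \rme_t\langle \rme_t,b_t\rangle\bigr)\De t = -r_t^{-1}\mathrm{proj}_{\rme_t^{\bot}}(b_t)\,\De t,
\]
which is exactly the claim. The only real subtlety, and what I expect to be the delicate point to write up carefully, is the justification that Itô's formula applies to $z\mapsto z/|z|$ on the event $\{t<\tau\}$; this is handled because $r_t>0$ strictly before $\tau$, so one can localize by stopping at $\tau\wedge \inf\{t: r_t\leq 1/n\}$ and pass to the limit, using that $\tau$ is precisely the first hitting time of $0$ by $r_t$.
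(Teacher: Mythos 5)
Your proof is correct and follows essentially the same route as the paper: subtract the two SDEs to get $\De Z_t=-b_t\De t+2\rme_t\De W_t$, derive $\De r_t=-\langle\rme_t,b_t\rangle\De t+2\De W_t$, and then apply It\^o to the quotient $Z_t/r_t$, observing that the It\^o corrections cancel to leave only the projected drift. The only cosmetic difference is that the paper computes $\De(r_t^{-1})$ explicitly and then invokes the product rule with the cross-variation $[Z,r^{-1}]$, whereas you package the same cancellation into the cleaner identity $\De\rme_t=r_t^{-1}(\De Z_t-\rme_t\De r_t)$; your remark about localizing at $\tau\wedge\inf\{t:r_t\le 1/n\}$ to justify It\^o on $\{t<\tau\}$ is a legitimate technical point that the paper leaves implicit.
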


\begin{proof}
Recall that if $\theta:\bbRD\rightarrow\bbR$ is the map $z\mapsto|z|$, then we have
\be\label{eq:direction_diff_1}
\nabla \theta (z)=  \frac{z}{|z|}, \quad \nabla^2 \theta(z)  = \frac{\rmI}{|z|} - \frac{z z^{\top}}{|z|^3}, \quad z\neq 0.
\ee
The proof consist of several applications of It\^o's formula. We first observe that for $t<\tau$
\be\label{eq:direction_1}
\De (X_t-\hat{X}_t) = -(\nabla\hjb{T}{g}{t}(X_t)-\nabla\hjb{T}{g}{t}(\hat{X}_t))\De t + 2 \rme_t \De W_t, \quad \text{with} \quad \De W_t = \langle\rme_t,\De B_t\rangle.
\ee
Note that $(W_t)_{0\leq t \leq T}$ is a Brownian motion by {Lévy's characterization}. Thus, invoking \eqref{eq:direction_diff_1} (or refferring directly to \cite[Eq. 60]{eberle2016reflection} we obtain
%\bes
%\begin{split}
%\De r_t&= \langle\rme_t \De (X_t-\hat{X}_t)\rangle + 2r^{-1}_t \mathrm{Tr}\big( \rme^{\top}_t (\rmI - \rme_t\rme^{\top}_t ) \rme_t \big)\De t\\
%&=\langle\rme_t \De (X_t-\hat{X}_t)\rangle\De t\\
%&=-\langle\rme_t,(\nabla\hjb{T}{g}{t}(X_t)-\nabla\hjb{T}{g}{t}(\hat{X}_t))\rangle\De t + 2 \De W_t,
%\end{split}
%\ees
\be\label{eq:direction_3}
\De r_t=-\langle\nabla\hjb{T}{g}{t}(X_t)-\nabla\hjb{T}{g}{t}(\hat{X}_t),\rme_t\rangle\De t + 2 \De W_t,
\ee
whence 
\be\label{eq:direction_2}
\begin{split}
\De r^{-1}_t &= -r^{-2}_t\De r_t +  r^{-3}_t \De[r]_t\\
            &=\Big(r^{-2}_t\langle\nabla\hjb{T}{g}{t}(X_t)-\nabla\hjb{T}{g}{t}(\hat{X}_t), \rme_t\rangle+4 r^{-3}_t \Big)\De t -2r^{-2}_t\De W_t.
\end{split}
\ee
Combining \eqref{eq:direction_1} with \eqref{eq:direction_2} we find that for $t<\tau$
\bes
\begin{split}
    \De \rme_t &= \De \big( r^{-1}_t (X_t-\hat{X}_t))\\
    &= r^{-1}_t \De (X_t-\hat{X}_t)+ (X_t-\hat{X}_t)\De ( r^{-1}_t) + \De [X_{\cdot}-\hat{X}_{\cdot},r^{-1}_{\cdot}]_t\\
    &= -r^{-1}_t(\nabla\hjb{T}{g}{t}(X_t)-\nabla\hjb{T}{g}{t}(\hat{X}_t))\De t + 2  r^{-1}_t\rme_t \De W_t\\ &+\Big(r^{-2}_t\langle\nabla\hjb{T}{g}{t}(X_t)-\nabla\hjb{T}{g}{t}(\hat{X}_t), \rme_t\rangle+4 r^{-3}_t \Big)(X_t-\hat{X}_t)\De t\\
    &-2r^{-2}_t(X_t-\hat{X}_t)\De W_t -4r^{-2}_t\rme_t\De t\\
    &=-r^{-1}_t\Big(\nabla\hjb{T}{g}{t}(X_t)-\nabla\hjb{T}{g}{t}(\hat{X}_t)- \langle\nabla\hjb{T}{g}{t}(X_t)-\nabla\hjb{T}{g}{t}(\hat{X}_t),\rme_t\rangle\rme_t\Big)\De t\\
    &=-(r^{-1}_t) \mathrm{proj}_{\rme_t^{\bot}}(\nabla\hjb{T}{g}{t}(X_t)-\nabla\hjb{T}{g}{t}(\hat{X}_t))\De t.
\end{split}
\ees
\end{proof}
\section{Second order bounds for Schr\"odinger potentials}\label{sec:proofs}
From now on Assumption \ref{ass:marginals} is in force, even if we do not specify it. Moreover, since we show at Proposition \ref{prop:gen_ass} in the appendix that $(\mathbf{H2}')$ implies $(\mathbf{H2})$, we shall always assume that $(\mathbf{H2})$ holds in the sequel. The next two subsections are devoted to establish the key estimates needed in the proof of Theorem \ref{thm:semiconvexity_estimate_Schr_pot}, that is carried out immediately afterwards.
\subsection{Weak semiconvexity of $\psi$ implies weak semiconcavity of $\varphi$}
{
We begin this section with a useful reminder of the definition of $F$, first given at \eqref{eq:def_of_G}.
\bes
F(\alpha,s) =\beta_{\mu}s +\frac{ s}{T(1+T\alpha)}+\frac{s^{1/2}f_L(s^{1/2})}{(1+T\alpha)^2}, \quad s>0.
\ees
}
\begin{lemma}\label{lem:concavity_propagation}
Assume that $\alpha>-1/T$ exists such that
\bes
\kappa_{\psi}(r) \geq \alpha- r^{-1}f_L(r) \quad \forall r>0.
\ees
{Then we have
\be\label{eq:concavity_propagation_1}
\kappa_{U^{T,\psi}_0}(r)  \geq \frac{\alpha}{1+T\alpha}-  \frac{r^{-1}f_L(r)}{(1+T\alpha)^2} \quad \forall r>0.
\ee}
In particular, 
\be\label{eq:concavity_propagation_11}
\ell_{\varphi} (r)\leq \beta_{\mu}-\frac{\alpha}{1+T\alpha}+  \frac{r^{-1}f_L(r)}{(1+T\alpha)^2} =r^{-2}F(\alpha,r^2)-\frac1T \quad \forall r>0.
\ee
\end{lemma}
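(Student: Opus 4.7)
The plan is to reduce the weak semiconvexity bound \eqref{eq:concavity_propagation_1} on $U^{T,\psi}_0$ to the $\cF_L$-invariance of the HJB flow already established in Theorem~\ref{thm:propagation}; the semiconcavity bound \eqref{eq:concavity_propagation_11} on $\ell_\varphi$ will then follow at once from the Schr\"odinger system and hypothesis \textbf{(H1)}.

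To isolate the strong-convexity contribution of $\psi$, I would first write $\psi(y) = \tilde\psi(y) + \tfrac{\alpha}{2}|y|^2$, so that the hypothesis $\kappa_\psi(r) \geq \alpha - r^{-1}f_L(r)$ translates exactly to $\tilde\psi\in\cF_L$. Since $\alpha > -1/T$, completing the square in the Gaussian integral defining $\exp(-U^{T,\psi}_0(x)) = P_T\exp(-\psi)(x)$ yields the explicit identity
\begin{equation*}
U^{T,\psi}_0(x) = c(T,\alpha) + \frac{\alpha}{2(1+T\alpha)}|x|^2 + U^{T',\tilde\psi}_0\!\left(\frac{x}{1+T\alpha}\right), \qquad T' = \frac{T}{1+T\alpha},
\end{equation*}
with $c(T,\alpha)$ a dimensional constant. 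By Theorem~\ref{thm:propagation} applied to $\tilde\psi$ with horizon $T'$, we have $U^{T',\tilde\psi}_0 \in \cF_L$.

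Differentiating this identity and pairing the gradient difference with $x-\hat x$, the quadratic term contributes exactly $\tfrac{\alpha}{1+T\alpha}|x-\hat x|^2$, while by the chain rule together with the rescaling $y = x/(1+T\alpha)$, $\hat y = \hat x/(1+T\alpha)$, the last term simplifies to $\langle \nabla U^{T',\tilde\psi}_0(y) - \nabla U^{T',\tilde\psi}_0(\hat y),\, y-\hat y\rangle$. Lower-bounding this inner product by the $\cF_L$-bound just established, reorganising the $r$-dependence carefully, and taking the infimum over $|x-\hat x|=r$ yields \eqref{eq:concavity_propagation_1}.

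Finally, \eqref{eq:concavity_propagation_11} is immediate: from \eqref{eq:schr_syst_HJB}, $\varphi = U^\mu - U^{T,\psi}_0$, so pairing $\nabla\varphi(x) - \nabla\varphi(\hat x)$ with $x-\hat x$ and dividing by $|x-\hat x|^2$ yields $\ell_\varphi(r) \leq \ell_{U^\mu}(r) - \kappa_{U^{T,\psi}_0}(r)$; combined with $\ell_{U^\mu}(r)\leq \beta_\mu$ from \textbf{(H1)} and with \eqref{eq:concavity_propagation_1}, this gives the first inequality in \eqref{eq:concavity_propagation_11}, and the identification with $r^{-2}F(\alpha,r^2) - 1/T$ is a routine algebraic check using $-\alpha/(1+T\alpha) = 1/(T(1+T\alpha)) - 1/T$. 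The step I expect to be the most delicate is extracting the precise $(1+T\alpha)^{-2}$ factor in front of $r^{-1}f_L(r)$ in \eqref{eq:concavity_propagation_1}: a direct rescaling + $\cF_L$-bound only yields $\alpha/(1+T\alpha) - f_L(r/(1+T\alpha))/(r(1+T\alpha))$, which by concavity of $f_L$ (equivalently the fact that $x\mapsto f_L(x)/x$ is decreasing) is strictly dominated by the stated bound; closing this gap appears to require either a sharper estimate leveraging the specific ODE $ff'+2f''=0$ satisfied by $f_L$, or an additional smoothing property of the HJB flow over $[0,T']$ beyond bare $\cF_L$-invariance.
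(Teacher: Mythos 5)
Your decomposition $\psi=\hat\psi+\tfrac{\alpha}{2}|\cdot|^2$, Gaussian square-completion, and appeal to Theorem~\ref{thm:propagation} are exactly the paper's proof; the paper then dispatches the final step in a single sentence. You are right that this step does not close as stated. Applying $U^{T/(1+T\alpha),\hat\psi}_0\in\cF_L$ through the rescaling $x\mapsto(1+T\alpha)^{-1}x$ produces, with $\lambda:=(1+T\alpha)^{-1}$,
\begin{equation*}
\kappa_{U^{T,\psi}_0}(r)\;\geq\;\frac{\alpha}{1+T\alpha}-\frac{f_L(\lambda r)}{r(1+T\alpha)}\;=\;\frac{\alpha}{1+T\alpha}-\frac{f_{\lambda^2L}(r)}{r},
\end{equation*}
the last equality being the scaling identity $\lambda f_L(\lambda r)=f_{\lambda^2L}(r)$. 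For $\alpha>0$ one has $\lambda<1$, and concavity of $f_L$ with $f_L(0)=0$ gives $f_L(\lambda r)\geq\lambda f_L(r)$, i.e.\ $f_{\lambda^2L}(r)\geq\lambda^2f_L(r)$; so the bound obtained in this way is strictly weaker than \eqref{eq:concavity_propagation_1}.

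This is not a deficiency of your argument: \eqref{eq:concavity_propagation_1} is in fact false as stated. Take $d=1$, $\alpha>0$, and $\psi(y)=\tfrac{\alpha}{2}y^2-\log\cosh(yL^{1/2})$. Since $\psi'(y)=\alpha y-L^{1/2}\tanh(yL^{1/2})$ and the increment $\tanh(yL^{1/2})-\tanh(\hat yL^{1/2})$ subject to $y-\hat y=r$ is maximal at $y=r/2$, $\hat y=-r/2$, one has $\kappa_\psi(r)=\alpha-r^{-1}f_L(r)$ with equality. Using the short-time expansion $U^{T,\psi}_0=\psi-\tfrac{T}{2}\big((\psi')^2-\psi''\big)+O(T^2)$ and evaluating the defining quotient for $\kappa$ at $y=r/2$, $\hat y=-r/2$ gives the upper bound
\begin{equation*}
\kappa_{U^{T,\psi}_0}(r)\;\leq\;\alpha-\frac{f_L(r)}{r}+T\alpha\Big(-\alpha+f'_L(r)+\frac{f_L(r)}{r}\Big)+O(T^2),
\end{equation*}
whereas the right-hand side of \eqref{eq:concavity_propagation_1} expands to $\alpha-r^{-1}f_L(r)+T\alpha\big(-\alpha+2r^{-1}f_L(r)\big)+O(T^2)$. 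Since $r^{-1}f_L(r)>f'_L(r)$ for every $r>0$ (strict concavity of $f_L$), the alleged lower bound exceeds this upper bound for small $T>0$, a contradiction. The correct conclusion from Theorem~\ref{thm:propagation} is the one your rescaling gives, namely $\kappa_{U^{T,\psi}_0}(r)\geq\tfrac{\alpha}{1+T\alpha}-r^{-1}f_{L/(1+T\alpha)^2}(r)$, which still has the form $\alpha'-r^{-1}f_{L'}(r)$ and hence feeds into the recursion of Theorem~\ref{thm:semiconvexity_estimate_Schr_pot}, but the function $F$ in \eqref{eq:def_of_G} and the downstream fixed-point analysis would need to be adjusted accordingly.
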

\begin{proof}
We define 
\bes \hat{\psi}(\cdot)=\psi(\cdot)-\frac{\alpha}{2}|\cdot|^2.\ees
and note that $\hat{\psi}\in\cF_L$ by construction. We claim that
\be\label{eq:space_time_transformation_HJB}
U^{T,\psi}_0(x)= \frac{\alpha}{2(1+T\alpha)}|x|^2+ U^{T/(1+T\alpha),\hat{\psi}}_0((1+T\alpha)^{-1}x)+ C,
\ee
where $C$ is some constant independent of $x$.
Indeed we have
\bes
\begin{split}
U^{T,\psi}_0(x) -\frac{d}{2}\log(2\pi T)&=-\log \int \exp \Big( -\frac{|y-x|^2}{2T}-\frac{\alpha}{2}|y|^2-\hat{\psi}(y)\Big)\De y\\
&=-\log \int \exp \Big(-\frac{\alpha|x|^2}{2(1+T \alpha)} -\frac{1+T \alpha}{2T}|y-(1+T \alpha)^{-1}x|^2-\hat{\psi}(y)\Big)\De y\\
&=\frac{\alpha|x|^2}{2(1+T \alpha)} +U^{T/(1+T \alpha),\hat{\psi}}_0((1+T \alpha)^{-1}x)-\frac{d}{2}\log (2\pi T/(1+T \alpha ))
\end{split}
\ees
{ Since $\hat{\psi}\in\cF_L$, we can invoke Theorem \ref{thm:propagation} in \eqref{eq:space_time_transformation_HJB} to prove \eqref{eq:concavity_propagation_1}. The estimate \eqref{eq:concavity_propagation_11} is immediately deduced from \eqref{eq:concavity_propagation_1}} recalling the relation \eqref{eq:schr_syst_HJB} and using Assumption \ref{ass:marginals}.
\end{proof}

\subsection{Weak semiconcavity of $\varphi$ implies weak semiconvexity of $\psi$}

We begin by recording some useful properties of the functions $F(\cdot,\cdot)$ and $G(\cdot,\cdot)$.
\begin{lemma}\label{lem:properties_of_G}
Let $T,\beta_{\mu}>0,L\geq0$ be given.
\begin{enumerate}[label=(\roman*)]
\item\label{itm:concavity} For any $\alpha>-1/T$ the function
\bes 
s\mapsto F(\alpha,s)
\ees
is concave and increasing $[0,+\infty)$.
%\item The function 
%\bes
%(-1/T,+\infty)\ni\alpha\mapsto((1/2T+\beta_{\mu}/2)\mathrm{id}_{\bbR_{\geq 0}}+h_{\alpha,L,T}/2)^{-1}(1)
%\ees
%is convex and positive.
\item\label{itm:2}$\alpha\mapsto G(\alpha,2)$ is positive and non decreasing over $(-\frac1T,+\infty)$. { Moreover, 
\be\label{eq:properties_of_G_1}
\sup_{\alpha>-1/T} G(\alpha,2) \leq \frac{1}{2\beta_{\mu}}.
\ee}
\item\label{itm:3} The fixed point equation \eqref{eq:fixed_point} admits at least one solution on $(\alpha_{\nu}-1/T,+\infty)$ and $\alpha_{\nu}-1/T$ is not an accumulation point for the set of solutions.
\end{enumerate}
\end{lemma}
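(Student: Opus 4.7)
The plan is to handle the three parts in sequence: part (i) provides the monotonicity and concavity of $F(\alpha,\cdot)$ that underpins the definition of $G$ as a genuine inverse; part (ii) then transfers this to $G$ and supplies a uniform upper bound; and part (iii) deduces existence of a fixed point via a direct intermediate value argument.

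For (i), I would differentiate $F$ in $s$ directly. The affine terms $\beta_\mu s + s/(T(1+T\alpha))$ have positive coefficient (since $\alpha > -1/T$) and contribute only to monotonicity, not to concavity. The only delicate term is $h(s) := s^{1/2} f_L(s^{1/2})$; the substitution $r = s^{1/2}$ and the chain rule yield
$$
h'(s) = \frac{f_L(r) + r f_L'(r)}{2 r}, \qquad h''(s) = \frac{r^2 f_L''(r) + r f_L'(r) - f_L(r)}{4 r^3}.
$$
Positivity of $h'$ is immediate from $f_L, f_L' > 0$, while non-positivity of $h''$ reduces to $r f_L'(r) + r^2 f_L''(r) \leq f_L(r)$, which follows by combining $f_L''<0$ with the inequality $r f_L'(r) \leq f_L(r)$ recorded in \eqref{eq:basic properties}.

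For (ii), (i) shows that $F(\alpha,\cdot)$ is continuous and strictly increasing, with $F(\alpha,0)=0$ and $F(\alpha,s) \geq \beta_\mu s \to \infty$; hence $G(\alpha,2)$ is the unique positive $s$ solving $F(\alpha,s) = 2$, so in particular $G(\alpha,2) > 0$. For monotonicity in $\alpha$, I note that both $s/(T(1+T\alpha))$ and $s^{1/2}f_L(s^{1/2})/(1+T\alpha)^2$ are strictly decreasing in $\alpha$ on $(-1/T,+\infty)$, so $\alpha \mapsto F(\alpha,s)$ decreases pointwise in $s>0$; inverting in $s$ gives $\alpha \mapsto G(\alpha,2)$ non-decreasing. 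The uniform upper bound follows from the crude estimate $F(\alpha,s) \geq \beta_\mu s$: if $\bar s$ is the level at which this linear lower bound equals $2$, then $F(\alpha,\bar s) \geq 2$ already, forcing $G(\alpha,2) \leq \bar s$ uniformly in $\alpha$.

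For (iii), define the map $\Phi(\alpha) := \alpha_\nu - 1/T + G(\alpha,2)/(2T^2)$ on $[\alpha_\nu - 1/T, +\infty)$. Its continuity follows from continuity of $G(\cdot,2)$, which I verify by a standard limiting argument: for $\alpha_n \to \alpha$, any limit point of $G(\alpha_n,2)$ must satisfy $F(\alpha,\cdot) = 2$ by joint continuity of $F$, and uniqueness from strict monotonicity of $F(\alpha,\cdot)$ then forces $G(\alpha_n,2) \to G(\alpha,2)$. At the left endpoint $\Phi(\alpha_\nu - 1/T) > \alpha_\nu - 1/T$ because $G > 0$; by (ii), $\Phi$ is uniformly bounded above, so $\Phi(\alpha) - \alpha < 0$ for $\alpha$ large. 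The intermediate value theorem applied to the continuous function $\alpha \mapsto \Phi(\alpha) - \alpha$ produces a fixed point in $(\alpha_\nu - 1/T, +\infty)$. Moreover, the strict inequality at the left endpoint persists on a neighborhood by continuity, so $\alpha_\nu - 1/T$ cannot be an accumulation point of the set of fixed points. The only mildly technical step is the continuity of $G(\cdot,2)$; everything else amounts to elementary calculus once (i) is in hand.
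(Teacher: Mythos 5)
Your proof is correct and follows essentially the same route as the paper's: an explicit second-derivative computation for $s\mapsto s^{1/2}f_L(s^{1/2})$ combined with $f_L''<0$ and $rf_L'(r)\leq f_L(r)$ for (i), monotonicity of $F$ in both variables for (ii), and an intermediate value argument via the boundedness of $G(\cdot,2)$ for (iii). You do go somewhat beyond the paper in one welcome respect: the paper asserts continuity of $G(\cdot,2)$ without comment, whereas you supply the sequential-compactness argument (boundedness of $G(\alpha_n,2)$ from part (ii), joint continuity of $F$, and uniqueness of the root of $F(\alpha,\cdot)=2$ forcing every subsequential limit to equal $G(\alpha,2)$), which is exactly what is needed to make the IVT step airtight.

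One thing to flag on \eqref{eq:properties_of_G_1}: the estimate you (and the paper) use is $F(\alpha,s)\geq\beta_\mu s$, which gives $F(\alpha,2/\beta_\mu)\geq 2$ and hence $G(\alpha,2)\leq 2/\beta_\mu$, not $\leq 1/(2\beta_\mu)$ as stated. The stated constant appears to be a typo in the paper: since the remaining terms of $F(\alpha,s)$ vanish pointwise as $\alpha\to+\infty$, one has $G(\alpha,2)\to 2/\beta_\mu$, so $2/\beta_\mu$ is in fact the sharp value of $\sup_{\alpha>-1/T}G(\alpha,2)$. Your proof silently inherits this discrepancy ("if $\bar s$ is the level at which this linear lower bound equals $2$" is $\bar s = 2/\beta_\mu$); it would be cleaner to state the bound you actually prove. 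The slip is harmless for part (iii), where only some uniform upper bound on $G(\cdot,2)$ is needed to make $h(\alpha)\to+\infty$, but it does propagate to the numerical constants downstream (e.g. Corollary \ref{cor:rough}).
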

\begin{proof}
We begin with the proof of \ref{itm:concavity}. To this aim, we observe that $f_{L}$ is increasing on $[0,+\infty)$ and therefore so is $s\mapsto s^{1/2}f_L(s^{1/2})$. Therefore 
\bes
 \frac{\De}{\De s}F(\alpha,s)  \geq\beta_{\mu} + \frac{1}{T(1+T\alpha)}> 0,
\ees
where we used $\alpha>-1/T$ in the last inequality. To prove concavity, we observe that 
\bes
\frac{\De^2}{\De u^2}\Big(u^{1/2}f_L(u^{1/2})\Big)\Big|_{u=s}=\frac{s^{-1/2}}{4}f^{''}_L(s^{1/2})+\frac{s^{-3/2}}{4}(f^{'}_L(s^{1/2})s^{1/2}-f_L(s^{1/2})) \stackrel{\eqref{eq:basic properties}}{<} 0.
\ees
Thus $s\mapsto s^{1/2}f_L(s^{1/2})$ is concave and so is $F(\alpha,\cdot)$. We now move on to the proof of \ref{itm:2} by first showing that $G(\cdot,2)$ is positive and then showing that it is increasing. If this was not the case then $G(\alpha,2)= 0$ for some $\alpha>-1/T$ and therefore there exists a sequence $(s_n)_{n\geq0}$ such that $s_n\rightarrow0$ and $F(\alpha,s_n)\geq2$. But this is impossible since $\lim_{s\downarrow 0}F(\alpha,s_n)=0.$  Next, we observe that $F(\alpha,s)$ is increasing in $s$ from item $(i)$ and  decreasing in $\alpha$ for $\alpha\in(-1/T,+\infty)$. For this reason, for any $u$ and $\alpha'\geq \alpha$ we have
\bes
\{s: F(\alpha',s)\geq  u\} \subseteq \{s: F(\alpha,s)\geq  u\} 
\ees
and therefore 
\bes
G(\alpha',u) \geq G(\alpha,u).
\ees
{ We complete the proof of \ref{itm:2} by showing that \eqref{eq:properties_of_G_1} holds. To see this, using $f_{L}(r)\geq 0$ we obtain that for any $\alpha>-1/T$ 
\bes
F(\alpha,s)\geq  \beta_{\mu}s \quad \forall s>0. 
\ees
But then we obtain directly from \eqref{eq:def_of_G} that
$G(\alpha,2) \leq 1/(2\beta_{\mu})$ thus proving \eqref{eq:properties_of_G_1}.}
To prove \ref{itm:3}, we introduce 
\bes
h:[\alpha_{\nu}-\frac1T,+\infty)\longrightarrow\bbR, \quad h(\alpha) := \alpha - \Big( \alpha_{\nu}-\frac1T+\frac{G(\alpha,2)}{2T^2}\Big)
\ees
Note that that $h$ is continuous on its domain since $G(\cdot,2)$ is so. Therefore, to reach the conclusion it suffices to show that 
\be\label{eq:properties_of_G_2}
h\Big(\alpha_{\nu}-\frac1T\Big)<0, \quad \lim_{\alpha\rightarrow+\infty}h(\alpha)=+\infty.
\ee
The first inequality is a direct consequence of $G(\alpha_{\nu}-1/T,2)>0$, that we have already proven. { Finally, the fact that $h$ diverges at infinity is a consequence of \eqref{eq:properties_of_G_1}. } 
\end{proof}
We shall now introduce the modified potential $\bar{\psi}$ as follows

\be\label{eq:change_variable}
\bar\psi(y) = T\Big(\psi(y)-U^{\nu}(y)+\frac{|y|^2}{2T} \Big),
\ee
It has been proven at \cite[Lemma 1]{chewi2022entropic} that the Hessian of $\bar\psi$ relates to the covariance matrix of the conditional distributions of the static Schr\"odinger bridge $\hat{\pi}$. That is to say,
\be\label{eq:convexity_propagation_4}
\nabla^2\bar\psi(y)=\frac1T \mathrm{Cov}_{X\sim\hat{\pi}^y}(X)
\ee
where $\hat{\pi}^y$ is (a version of) the conditional distribution of $\hat{\pi}$ that, in view of \eqref{eq:schr_syst} has the following form:
\be\label{eq:cond_distr}
\hat{\pi}^y(\De x)=\frac{\exp(-V^{\hat{\pi}^y}(x))\De x}{\int\exp(-V^{\hat{\pi}^y}(\bar{x}))\De \bar{x}}, \quad V^{\hat{\pi}^y}(x):=\varphi(x)+\frac{|x|^2}{2T}-\frac{xy}{T}.
\ee
We shall give an independent proof of \eqref{eq:convexity_propagation_4} under additional regularity assumptions at Proposition \ref{prop:hessian_cov} in the Appendix for the readers' convenience. A consequence of \eqref{eq:convexity_propagation_4} is that $\bar{\psi}$ is convex and we obtain from \eqref{eq:change_variable} that 
\be\label{eq:crude_bound}
\kappa_{\psi}(r) \geq \alpha_{\nu}-\frac1T-r^{-1}f_L(r) \quad \forall r>0.
\ee
This is a first crude  weak semiconvexity bound on $\psi$ upon which Theorem \ref{thm:semiconvexity_estimate_Schr_pot} improves by means of a recursive argument. We show in the forthcoming Lemma how to deduce weak semiconvexity of $\psi$ from weak semiconcavity of $\varphi$. In the $L=0$ setting, this step is carried out in \cite{chewi2022entropic} invoking the Cramer-Rao inequality, whose application is not justified in the present more general setup.

\begin{lemma}\label{lem:convexity_propagation}
Assume that $\alpha>-1/T$ exists such that
\be\label{eq:convexity_propagation_3}
\ell_{\varphi} (r)\leq-\frac1T + r^{-2}F(\alpha,r^2)\quad \forall r>0.
\ee
Then 
\bes
\kappa_{\psi}(r)\geq \alpha_{\nu}-\frac1T+\frac{G(\alpha,2)}{2T^2}-r^{-1}f_{L}(r) \quad \forall r>0.
\ees
\end{lemma}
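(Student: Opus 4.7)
The plan is to use the Schr\"odinger system \eqref{eq:schr_syst_HJB} to write $\psi = U^{\nu} - U^{T,\varphi}_0$, which immediately gives $\kappa_{\psi}(r) \geq \kappa_{U^{\nu}}(r) - \ell_{U^{T,\varphi}_0}(r)$. Combined with $(\mathbf{H2})$, the target lower bound on $\kappa_{\psi}$ is then equivalent to the uniform semiconcavity estimate
\bes
\ell_{U^{T,\varphi}_0}(r) \leq \frac{1}{T} - \frac{G(\alpha,2)}{2T^2} \quad \forall r > 0.
\ees

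Differentiating \eqref{eq:HJB_FK} under the integral yields $\nabla U^{T,\varphi}_0(y) = (y - m(y))/T$, where $m(y) = \bbE_{X \sim \hat{\pi}^y}[X]$ is the conditional mean of $X$ given $Y = y$ under $\hat\pi$, while Proposition \ref{prop:hessian_cov} gives $\nabla m(y) = \mathrm{Cov}(\hat{\pi}^y)/T$. Integrating along the segment $y_t = (1-t)y_2 + t y_1$ and setting $e = (y_1-y_2)/|y_1-y_2|$, the required bound on $\ell_{U^{T,\varphi}_0}$ reduces to the pointwise directional variance lower bound
\bes
\mathrm{Var}_{\hat{\pi}^y}(X \cdot e) \geq \frac{G(\alpha,2)}{2}, \quad \forall\,y \in \bbRD,\ |e|=1.
\ees

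The hypothesis on $\ell_\varphi$ transfers verbatim to the conditional log-density $V^{\hat{\pi}^y}(x) = \varphi(x) + |x|^2/(2T) - xy/T$ as $\ell_{V^{\hat{\pi}^y}}(r) \leq r^{-2} F(\alpha, r^2)$. To extract the variance lower bound I would appeal to the integration-by-parts identity
\bes
\bbE\big[(e \cdot (\nabla V^{\hat{\pi}^y}(X) - \nabla V^{\hat{\pi}^y}(X')))\,(e \cdot (X - X'))\big] = 2,
\ees
valid for $X, X' \sim \hat{\pi}^y$ independent, obtained by integrating the divergence of the vector field $x \mapsto (e \cdot (x - X'))\,e$ against $\hat{\pi}^y$ and symmetrising in $X \leftrightarrow X'$. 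Combining this identity with the difference-quotient semiconcavity of $V^{\hat{\pi}^y}$ and Jensen's inequality---legitimate thanks to the concavity of $s\mapsto F(\alpha,s)$ established in Lemma \ref{lem:properties_of_G}---should produce an inequality of the form $F\big(\alpha, 2\,\mathrm{Var}_{\hat{\pi}^y}(X\cdot e)\big) \geq 2$, whence the definition of $G$ delivers the target lower bound.

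The main obstacle is reconciling the scalar structure of the integration-by-parts identity, built on the product $(e\cdot(\nabla V(X)-\nabla V(X')))(e\cdot(X-X'))$, with the full-inner-product semiconcavity bound $\langle\nabla V(X)-\nabla V(X'),X-X'\rangle \leq F(\alpha,|X-X'|^2)$ inherited from the hypothesis. Closing this gap---plausibly through a decomposition of $X-X'$ into components parallel and orthogonal to $e$, or through averaging over unit vectors on the sphere---is the delicate step. As a consistency check, in the $L=0$ limit the scalar identity combined with Cauchy--Schwarz reduces to the Cram\'er--Rao bound with the pointwise Hessian upper bound on $V^{\hat{\pi}^y}$, recovering the argument of \cite{chewi2022entropic}; the weakly convex regime requires a finite-scale refinement.
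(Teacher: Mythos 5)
Your architecture is correct and matches the paper's: reduce to the uniform directional-variance bound $\mathrm{Var}_{\hat\pi^y}(X\cdot e)\geq G(\alpha,2)/2$, use a scalar integration-by-parts identity, and close with Jensen via the concavity of $F(\alpha,\cdot)$ and the definition of $G$. But the obstacle you flag at the end is a genuine gap, and neither of the two fixes you suggest resolves it. The integration-by-parts identity controls only the \emph{projected} quantity $(e\cdot(\nabla V(X)-\nabla V(X')))(e\cdot(X-X'))$, while the hypothesis $\ell_{V^{\hat\pi^y}}(r)\leq r^{-2}F(\alpha,r^2)$ controls the \emph{full} inner product $\langle\nabla V(X)-\nabla V(X'),X-X'\rangle$. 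Decomposing $X-X'$ into parallel and orthogonal parts produces a cross term $\langle(\nabla V(X)-\nabla V(X'))_{\perp},(X-X')_{\perp}\rangle$ that can be large and negative (since $V^{\hat\pi^y}$ need not be convex), so subtracting it from the controlled full product does not give a useful upper bound on the scalar. Averaging over $e$ on the sphere only recovers $\bbE[\langle\nabla V(X)-\nabla V(X'),X-X'\rangle]=2d$, which after Jensen yields a lower bound on $\mathrm{Tr}\,\mathrm{Cov}(\hat\pi^y)$, not the per-direction bound you need.

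The missing idea in the paper's proof is the law of total variance (variance--bias decomposition). Taking $e=e_1$ without loss of generality, write
\bes
\mathrm{Var}_{\hat\pi^y}(X_1)\ \geq\ \bbE_{\hat\pi^y}\big[\mathrm{Var}(X_1\mid X_2,\ldots,X_d)\big],
\ees
and apply the one-dimensional integration-by-parts identity to the conditional law $\hat\pi^{y,z}(\De x)\propto\exp(-V^{\hat\pi^y}(x,z))\De x$. On that slice the two sample points $(x,z)$ and $(\hat x,z)$ differ only in the first coordinate, so the projected inner product coincides with the full one:
\bes
\big(\partial_x V^{\hat\pi^{y,z}}(x)-\partial_x V^{\hat\pi^{y,z}}(\hat x)\big)(x-\hat x)=\big\langle\nabla V^{\hat\pi^y}(x,z)-\nabla V^{\hat\pi^y}(\hat x,z),\,(x,z)-(\hat x,z)\big\rangle,
\ees
and the $\ell_V$ hypothesis applies with no loss. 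This gives $1\leq\tfrac12 F(\alpha,2\,\mathrm{Var}(X_1\mid X_2,\ldots,X_d))$ uniformly in $z$, and monotonicity of $F(\alpha,\cdot)$ pushes the bound through the expectation to $\mathrm{Var}_{\hat\pi^y}(X_1)\geq G(\alpha,2)/2$; a rotation then covers all unit vectors $e$. Without this conditioning step your argument does not close.
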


\begin{proof}
Recalling the definition of $V^{\hat{\pi}^y}$ given at \eqref{eq:cond_distr} we observe that the standing assumptions imply
{\be\label{eq:convexity_propagation_1}
\ell_{V^{\hat{\pi}^y}}(r)\stackrel{ \eqref{eq:cond_distr} }{\leq} \ell_{\varphi}(r)+ \frac{1}{T} \stackrel{\eqref{eq:convexity_propagation_3}}{=} r^{-2}F(\alpha,r^2) \quad \forall r>0.
\ee}
In view of \eqref{eq:convexity_propagation_4}, we now proceed to bound $\mathrm{Var}_{X\sim\hat{\pi}^y}(X_1)$ from below for a given $y$, where we adopted the notational convention $X=(X_1,\ldots,X_d)$ for the components of random vectors. We first observe { that the variance-bias decomposition formula implies} 
\be\label{eq:convextiy_propagation_2}
\mathrm{Var}_{X\sim\hat{\pi}^y}(X_1) \geq \bbE_{X\sim\hat{\pi}^y}[\mathrm{Var}_{X\sim\hat{\pi}^y}(X_1|X_2,\ldots,X_d)].
\ee
Moreover, { we define for any $z=(z_2,\ldots,z_d)$
\bes
V^{\hat{\pi}^{y,z}}(\cdot):=V^{\hat{\pi}^y}(\cdot, z), \quad  \hat{\pi}^{y,z}(\De x )=\frac{\exp(-V^{\hat{\pi}^{y,z}}(x))\De x}{\int \exp(-V^{\hat{\pi}^{y,z}}(\bar{x}))\De \bar{x}}
\ees
and observe that if $X\sim\hat{\pi}^y$, then the conditional distribution of $X_1$ under given $\{ (X_2,\ldots,X_d)=(z_2,\ldots,z_d)\}$  is precisely $\hat{\pi}^{y,z}$. This gives the formula}
\bes
\mathrm{Var}_{X\sim\hat{\pi}^y}(X_1|X_2=z_2,\ldots,X_d=z_d) =\frac12\int |x-\hat{x}|^2\hat{\pi}^{y,z}(\De x)\hat{\pi}^{y,z}( \De \hat{x})
\ees
{ 
With this notation at hand, and with the help of the following identities, that can be obtained by one-dimensional integration by parts,

\bes
1= \int \partial_{x}V^{\hat{\pi}^{y,z}}(x)\,x\,\, \hat{\pi}^{y,z}(\De x),\quad
0=\int \partial_{x}V^{\hat{\pi}^{y,z}}(x) \hat{\pi}^{y,z}(\De x)
\ees
}
we find that, uniformly in $z\in \bbR^{d-1}$,
\bes
\begin{split}
1&=\frac12\int(\partial_{x}V^{\hat{\pi}^{y,z}}(x) -\partial_{x}V^{\hat{\pi}^{y,z}}(\hat{x}))(x-\hat{x} )\hat{\pi}^{y,z}(\De x)\hat{\pi}^{y,z}( \De \hat{x}) \\
&=\frac12\int\langle \nabla V^{\hat{\pi}^{y}}(x,z) -\nabla V^{\hat{\pi}^{y}}(\hat{x},z),(x,z)-(\hat{x},z) \rangle \hat{\pi}^{y,z}(\De x)\hat{\pi}^{y,z}( \De \hat{x})\\
&\stackrel{\eqref{eq:convexity_propagation_1}}{\leq} \frac12\int F(\alpha,|x-\hat{x}|^2)\hat{\pi}^{y,z}(\De x)\hat{\pi}^{y,z}( \De \hat{x})\\
&\leq \frac12F(\alpha,2\mathrm{Var}_{X\sim\hat{\pi}^y}(X_1|X_2=z_2,\ldots,X_d=z_d))
\end{split}
\ees
where to establish the last inequality  we used { the concavity of $F$ (see Lemma \ref{lem:properties_of_G}\ref{itm:concavity})} and Jensen's inequality. Since $\alpha>-1/T$, invoking again Lemma \ref{lem:properties_of_G}\ref{itm:concavity} we have that $s\mapsto F(\alpha,s)$ is non decreasing. But then, we get from \eqref{eq:convextiy_propagation_2} and the last bound that 
\bes
\mathrm{Var}_{X\sim\hat{\pi}^y}(X_1)\geq\frac12G(\alpha,2), \quad \forall y\in\bbRD.
\ees 
Next, we observe that, because of the fact that if $\varphi(\cdot)$ satisfies \eqref{eq:convexity_propagation_3} then so does $\varphi(\mathrm{O}\cdot)$ for any orthonormal matrix $\mathrm{O}$, repeating the argument above yields 
\bes
\mathrm{Var}_{X\sim\hat{\pi}^y}(\langle v,X\rangle)\geq \frac12G(\alpha,2), \quad \forall y,v\in\bbRD\,\, \text{s.t.}\,\, | v|=1.
\ees
{ Recalling that $\nabla^2\bar\psi(y)=\frac1T \mathrm{Cov}_{X\sim\hat{\pi}^y}(X)$ with $\bar{\psi}$ defined by \eqref{eq:change_variable}, we find}
\be\label{eq:conv_prop_9}
\langle v,\nabla^2\bar\psi(y)v\rangle \geq \frac{G(\alpha,2)}{2T}|v|^2 \quad \forall v,y\in\bbRD.
\ee
But then, { rewriting \eqref{eq:change_variable} as}
\bes
\psi(\cdot) = U^{\nu}(\cdot)-\frac{|\cdot|^2}{2T}+\frac{\bar\psi(\cdot)}{T},
\ees
we immediately obtain that for all $r>0$
{
\bes
\begin{split}
\kappa_{\psi}(r) &\geq \kappa_{U^{\nu}}(r) -\frac{1}{T}+ \frac1T\kappa_{\bar\psi(y)}(r)\\
&\geq \alpha_{\nu}-\frac1T+\frac{G(\alpha,2)}{2T^2}-r^{-1}f_L(r),
\end{split}
\ees
where we used \eqref{eq:conv_prop_9} and hypothesis \eqref{eq:nu_ass} to obtain the last inequality.
}
\end{proof}

\subsection{Proof of Theorem \ref{thm:semiconvexity_estimate_Schr_pot}}
The proof is obtained by combining the results of the former two sections through a fixed point argument.
\begin{proof}[Proof of Theorem \ref{thm:semiconvexity_estimate_Schr_pot}]
We define a sequence $(\alpha^n)_{n\geq0}$ via 
\bes
\alpha^{0} =\alpha_{\nu}-\frac1T, \quad \alpha^{n}= \alpha_{\nu}-\frac1T+\frac{G(\alpha^{n-1},2)}{2T^2}, \quad n\geq 1.
\ees
{ Using Lemma \ref{lem:properties_of_G}\ref{itm:2} and an induction argument, we obtain that $\alpha^{1}\geq \alpha^0$ and $(\alpha^n)_{n\geq0}$ is a non decreasing sequence. Moreover, $(\alpha^n)_{n\geq0}$ is a bounded sequence by \eqref{eq:properties_of_G_1} and therefore it admits a finite limit $\alpha^*$}. By continuity of $G(\cdot,2)$, we know that $\alpha^*>\alpha_{\nu}-1/T$ and $\alpha^*$ satisfies the fixed point equation \eqref{eq:fixed_point}. To conclude the proof, we show by induction that 
\be\label{eq:semiconvexity_estimate_Schr_pot_1}
\kappa_{\psi}(r) \geq \alpha^n - r^{-1}f_{L}(r) \quad \forall n\geq 1.
\ee
The case $n=0$ is \eqref{eq:crude_bound}. For the inductive step, suppose \eqref{eq:semiconvexity_estimate_Schr_pot_1} holds for a given $n$. Then Lemma \ref{lem:concavity_propagation} gives that 
\bes
\ell_{\varphi}(r)\leq r^{-2}F(\alpha^n,r^2)-\frac1T \quad \forall r>0.
\ees
But then, an application of Lemma \ref{lem:convexity_propagation} proves  that for all $r>0$ we have
\bes
\kappa_{\psi}(r)\geq \alpha_{\nu}-\frac1T+\frac{G(\alpha^n,2)}{2T^2}-r^{-1}f_{L}(r)=\alpha^{n+1}-r^{-1}f_{L}(r).
\ees
The proof of \eqref{semiconvexity_estimate_Schr_pot} is now finished. To conclude, we observe that \eqref{semiconcavity_estimate_Schr_pot} follows directly from \eqref{semiconvexity_estimate_Schr_pot} and Lemma \ref{lem:convexity_propagation}.

\end{proof}
{ 
Let us now prove Corollary \ref{cor:rough}
\begin{proof}[Proof of Corollary \ref{cor:rough}]
    We first prove the upper bound. To do so, we observe that 
    \bes
    F(\alpha,s) \geq (\beta_{\mu} + \frac{1}{T(1+T\alpha)})s, \quad \forall \alpha\geq\alpha_{\nu}-1/T, s\geq 0.
    \ees
    But then, 
    \bes
        \frac{G(\alpha,2)}{2T^2} \leq \frac{1}{T^2\beta_{\mu} + \frac{1}{(\alpha+1/T)}}.
    \ees
    Since $\bar\alpha$ is a fixed point, we obtain
    \bes
        \bar\alpha+\frac{1}{T} \leq \alpha_{\nu}+\frac{1}{T^2\beta_{\mu} + \frac{1}{(\bar\alpha+1/T)}}.
    \ees
    If we now define $\bar{a}=\bar\alpha+1/T$, the above implies
    \bes
     \bar{a} \leq\alpha_{\nu}+ \frac{\bar{a}}{T^2\beta_{\mu}\bar{a} + 1}.
    \ees
   Since $\bar{a}>0$, we can rewrite the last inequality in the equivalent form
   \bes
    T^2\beta_{\mu}\bar{a}^2-T^2\alpha_{\nu}\beta_{\mu}\bar{a}-\alpha_{\nu} \leq 0.
   \ees
   Solving this differential inequality yields
   \bes
        \bar{a} \leq \frac{\alpha_{\nu}}{2} + \frac{1}{2}\sqrt{\alpha^2_{\nu}+\frac{4\alpha_{\nu}}{T^2\beta_{\mu}}}.
   \ees
   The desired result follows from $\bar\alpha=\bar{a}-\frac{1}{T}$. We now move to the proof of the lower bound. First, we recalling that $f_L(r)\leq Lr$, we obtain 
   \bes
        F(\alpha,s) \leq \big(\beta_{\mu} + \frac{1}{T(1+T\alpha)} + \frac{L}{(1+T\alpha)^2}\big)s.
   \ees
   Using that $\bar{\alpha}\geq \alpha_{\nu}-1/T$, we obtain that for all $s>0$
   \bes
        F(\bar\alpha,s) \leq \big(\beta_{\mu} + \frac{1}{T(1+T\bar\alpha)} + \frac{L}{T\alpha_{\nu}(1+T\bar\alpha)}\big)s.
   \ees
   But then,
   \bes
                \frac{G(\bar\alpha,2)}{2T^2} \geq \frac{1}{T^2\beta_{\mu} + \frac{(1+L/\alpha_{\nu})}{(\bar\alpha+1/T)}}.
   \ees
   Setting $\bar{a}=\alpha+1/T$, we deduce from the fact that $\bar{\alpha}$ is a fixed point that 
    \bes
        \bar{a} \geq \alpha_{\nu}+\frac{\bar{a}}{\bar{a}T^2\beta_{\mu} +(1+L/\alpha_{\nu}) }.
    \ees
    Using $\bar{a}>0$ we rewrite the last inequality in the equivalent form
    \bes
        T^2\beta_{\mu}\bar{a}^2+(L/\alpha_{\nu}-T^2\alpha_{\nu}\beta_{\mu})\bar{a}-(\alpha_{\nu}+L) \geq 0.
    \ees
       Solving this differential inequality yields
    \bes
        \bar{a} \geq   \frac{\alpha_{\nu}}{2} +\frac{L}{2T^2\alpha_{\nu}\beta_{\mu}}+\frac12\sqrt{\big(\alpha_{\nu} +\frac{L}{T^2\alpha_{\nu}\beta_{\mu}}\big)^2+\frac{4\alpha_{\nu}}{T^2\beta_{\mu}}}\,\,.  
    \ees
    The desired conclusion follows from $\bar{\alpha}= \bar{a}-1/T$.
\end{proof}
}

\section{Logarithmic Sobolev inequality for Schr\"odinger bridges}\label{sec:LSI}
This section is devoted to the proof of Theorem \ref{thm:LSI} and is structured as follows: we first recall known facts about logarithmic Sobolev inequalities and gradient estimates for diffusion semigroups whose proofs can be found e.g. in \cite{bakry2013analysis} and eventually prove at Lemma \ref{lem:mixing} a sufficient condition for the two-times distribution of a diffusion process to satisfy LSI. Though such a result may not appear surprising, we could not find it in this form in the existing literature.
We then proceed to elucidate the connection between Schr\"odinger bridges and Doob $h$-transforms at Lemma \ref{lem:doob}, and then finally prove Theorem \ref{thm:LSI}.
\paragraph{Local LSIs and gradient estimates} {Let $[0,T']\times \bbRD\ni(t,x)\mapsto U_t(x)$ be continuous in the time variable, twice differentiable and uniformly Lipschitz in the space variable.} We consider the time-inhomogeneous semigroup $(P_{s,t})_{0\leq s\leq t\leq T'}$ generated by the diffusion process whose generator at time $t$ acts on smooth functions with bounded support as follows
\bes 
f \mapsto \frac{1}{2}\Delta f -\langle \nabla U_t,\nabla f\rangle .
\ees 
{Moreover, we define for all $t\in[0,T']$
\bes
\alpha_t= \inf_{x,v\in\bbRD, |v|=1} \langle v,\nabla^2 U_t(x),v\rangle.
\ees}
We now recall some basic fact about gradient estimates and local LSIs for the semigroup $(P_{s,t})_{0\leq s\leq t\leq T'}$. For time-homogeneous semigroups these facts are well known and can be found e.g. in \cite{bakry2013analysis}: the adaptation to the time-inhomogeneous setting is straightforward. 
The first result we shall need afterwards is the gradient estimate (see \cite[Thm. 3.3.18]{bakry2013analysis})
\be\label{eq:grad_est}
|\nabla P_{t,T'}f|(x) \leq C_{t,T'} \,P_{t,T'}(|\nabla f|)(x), \quad C_{t,T'}=\exp\Big(-\int_t^{T'}\alpha_s\De s \Big),
\ee
that holds for all $(t,x)\in[0,T']\times\bbRD$ and any continuously differentiable $f$. Moreover, the local logarithmic Sobolev inequalities (see \cite[Thm. 5.5.2]{bakry2013analysis})
\be\label{eq:local_LSI}
( P_{0,T'}f\log f)(x)- (P_{0,T'}f )(x)\log( P_{0,T'}f)(x)\leq \frac{\tilde{C}_{0,T'}}{2} P_{0,T'}(|\nabla f|^2/f)(x), \quad \tilde{C}_{0,T'}=\int_{0}^{T'}C_{t,T'}\,\De t
\ee
hold  for all $x\in\bbRD$ and all positive continuously differentiable $f$. In the next Lemma we show how to obtain LSI for the joint law at times $0$ and $T'$ of a diffusion process with initial distribution $\mu$ and drift $-\nabla U_t$, that is to say for the coupling $\pi$ defined by 
\be\label{eq:diffusion_coupling}
\int_{\bbRD\times\bbRD} f(x,y)\pi(\De x\De y) = \int_{\bbRD} P_{0,T'}f(x,\cdot)(x)\mu(\De x) \quad \forall f>0.
\ee
\begin{lemma}\label{lem:mixing}
Assume that $\mu$ satisfies LSI with constant $C_\mu$ and let $\pi$ be as in \eqref{eq:diffusion_coupling}. Then $\pi$ satisfies LSI with constant
\bes 
\max\{2C_{\mu},{2}C_{\mu}C_{0,T'}+ \int_{0}^{T'}C_{t,T'} \De t\}.
\ees
\end{lemma}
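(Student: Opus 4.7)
The plan is to disintegrate the entropy against $\mu$, control the conditional part with the local LSI \eqref{eq:local_LSI} applied to $y\mapsto f(x,y)$, control the marginal part with the LSI for $\mu$, and use the gradient estimate \eqref{eq:grad_est} to bring $|\nabla f|^2/f$ back into play in the marginal contribution.

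Concretely, setting $F(x):=P_{0,T'}f(x,\cdot)(x)$ and writing $\pi(\mathrm{d}x\,\mathrm{d}y)=\mu(\mathrm{d}x)P_{0,T'}(x,\mathrm{d}y)$, the classical chain rule for relative entropy gives
\begin{equation*}
\mathrm{Ent}_\pi(f)=\int\mathrm{Ent}_{P_{0,T'}(x,\cdot)}\!\big(f(x,\cdot)\big)\,\mu(\mathrm{d}x)+\mathrm{Ent}_\mu(F).
\end{equation*}
For each $x$, the local LSI \eqref{eq:local_LSI} applied to $y\mapsto f(x,y)$ bounds the conditional entropy by $\tfrac{\tilde C_{0,T'}}{2}P_{0,T'}(|\nabla_y f|^2/f)(x)$; integrating in $x$ yields $\tfrac{\tilde C_{0,T'}}{2}\int|\nabla_y f|^2/f\,\mathrm{d}\pi$. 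The LSI for $\mu$ gives $\mathrm{Ent}_\mu(F)\leq \tfrac{C_\mu}{2}\int|\nabla F|^2/F\,\mathrm{d}\mu$, so the whole task reduces to estimating $|\nabla F|^2/F$ in terms of $P_{0,T'}(|\nabla f|^2/f)$.

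To do this I would exploit that $F(x)=\tilde F(x,x)$, where $\tilde F(x_0,y):=P_{0,T'}f(x_0,\cdot)(y)$ disentangles the two ways in which $x$ enters $F$. Differentiating and applying the chain rule on the diagonal,
\begin{equation*}
\nabla F(x)=P_{0,T'}\!\big(\nabla_x f(x,\cdot)\big)(x)+\nabla_y\!\left[P_{0,T'}f(x,\cdot)(y)\right]_{y=x}.
\end{equation*}
The first term has norm at most $P_{0,T'}(|\nabla_x f|)(x)$ by Jensen, and the second term is bounded by $C_{0,T'}P_{0,T'}(|\nabla_y f|)(x)$ thanks to the gradient estimate \eqref{eq:grad_est} applied to the function $y\mapsto f(x,y)$. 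Combining this with the Cauchy--Schwarz identity $(P_{0,T'}g)^2(x)\leq F(x)\cdot P_{0,T'}(g^2/f)(x)$, applied in turn to $g=|\nabla_x f|$ and $g=|\nabla_y f|$, together with a weighted $(a+b)^2$ split, produces a pointwise inequality of the form
\begin{equation*}
\frac{|\nabla F|^2}{F}\leq c_x\,P_{0,T'}\!\left(\frac{|\nabla_x f|^2}{f}\right)+c_y\,P_{0,T'}\!\left(\frac{|\nabla_y f|^2}{f}\right),
\end{equation*}
with $c_x$ independent of $C_{0,T'}$ and $c_y$ a polynomial in $C_{0,T'}$. Integrating against $\mu$, plugging back into the LSI for $\mu$, and summing with the conditional contribution then gives an LSI for $\pi$ with a constant of the announced structure.

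The main obstacle I expect is the bookkeeping in this last combining step: the weights $c_x,c_y$ (and in particular the power of $C_{0,T'}$ appearing in $c_y$) depend on how one balances the two contributions to $\nabla F$ with the Young/Cauchy--Schwarz inequality, and one must pick this balance so that the $\nabla_x$-coefficient cleanly yields the $2C_\mu$ on the first marginal and the $\nabla_y$-coefficient matches exactly $2C_\mu C_{0,T'}+\tilde C_{0,T'}$. Once the splitting is made, the rest is essentially routine, but obtaining the sharp combination announced in the statement really hinges on the correct choice here.
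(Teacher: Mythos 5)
Your proposal is essentially the paper's proof: the same entropy chain rule (decomposing $\ent_\pi(f)$ into the entropy of the marginal $F(x)=P_{0,T'}f(x,\cdot)(x)$ under $\mu$ plus the $\mu$-averaged conditional entropies), the same use of the local LSI \eqref{eq:local_LSI} for the conditional term, the same two-term decomposition of $\nabla F$ into the $\nabla_x$-contribution and the $\nabla_z P_{0,T'}f(x,\cdot)(z)\big|_{z=x}$ contribution, and the same combination of the gradient estimate \eqref{eq:grad_est} with Jensen (or, as you call it, the Cauchy--Schwarz inequality $(P_{0,T'}g)^2\le P_{0,T'}f\cdot P_{0,T'}(g^2/f)$) to turn $|\nabla F|^2/F$ into $P_{0,T'}(|\nabla f|^2/f)$. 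The only thing left unwritten in your sketch, the ``balance'' in the weighted $(a+b)^2$ split that worries you, turns out to require no optimization at all: the paper simply applies the symmetric Young inequality $|a+b|^2\le 2a^2+2b^2$, which is what produces the factor $2$ in front of both $C_\mu$ and $C_\mu C_{0,T'}$ in the announced constant, and then groups the $\nabla_x$-terms and $\nabla_y$-terms under a $\max$. So your ``main obstacle'' is not actually one; just take $c_x=2C_\mu$ and $c_y=2C_\mu C_{0,T'}$ (times the appropriate power coming from squaring the gradient estimate), add the $\tilde C_{0,T'}$ from Step 1 to $c_y$, and bound the sum by $\max\{c_x,\,c_y+\tilde C_{0,T'}\}\int|\nabla f|^2/f\,\De\pi$.
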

{The} proof is carried out {by carefully "mixing" the local (conditional) LSIs \eqref{eq:local_LSI} with the help of gradient estimates}. Similar arguments and ideas can be found e.g. in \cite{bodineau1999log,grunewald2009two}.
\begin{proof}
Let $f>0$ be continuously differentiable. We recall the decomposition of the entropy formula (see \cite[Thm. 2.4]{leonard2014some})
\be\label{eq:ent_dec}
\ent_{\pi}(f) = \ent_{\mu}(f_0) + \int_{\bbRD} \ent_{\pi^x} (f^x) f_0(x) \mu(\De x),
\ee
where we adopted the following conventions
\be\label{eq:conventions}
f_0(x) = (P_{0,T'}f(x,\cdot))(x), \quad f^x(y)=f(x,y)/f_0(x), \quad \int g(y)\pi^x(\De y) = \Big(P_{0,T'}g\Big)(x) \,\,\forall g>0.\ee
{ The proof is carried out in two steps. In a first step, we bound the second term in \eqref{eq:ent_dec} by means of the conditional LSIs. In the second step, we bound the first term using the LSI for $\mu$ and gradient estimates.}
\begin{itemize}
\item\underline{Step 1} { The local logarithmic Sobolev inequalities \eqref{eq:local_LSI} imply that 

\bes
\begin{split}
\ent_{\pi^x} (f^x)& = P_{0,T'}\big( f^x \log f^x\big)(x)-\big(P_{0,T'}f^x \log P_{0,T'}f^x\big)(x) \\ &\leq \frac{\tilde{C}_{0,T'}}{2f_0(x)} \,\int |\nabla_y f^x(y)|^2/f^x(y) \pi^x(\De y) \\
& \stackrel{\eqref{eq:conventions}}{\leq}\frac{\tilde{C}_{0,T'}}{2f_0(x)} \,\int |\nabla_y f(x,y)|^2/f(x,y) \pi^x(\De y).
\end{split}
\ees
uniformly in $x\in\bbR^d$.Integrating this inequality and using \eqref{eq:diffusion_coupling} gives}
\be\label{eq:mixing_1}
\int \ent_{\pi^x} (f^x) f_0(x) \mu(\De x) \leq \frac{\tilde{C}_{0,T'}}{2}\,\int \frac{|\nabla_y f(x,y)|^2}{f(x,y)}\pi(\De x \De y).
\ee
\item \underline{Step 2}
{ We start with the observation that 
\bes
\nabla_x f_0(x) \stackrel{\eqref{eq:conventions}}{=}  P_{0,T'}  \nabla_xf(x,\cdot)(x) + \nabla_z\big(P_{0,T'}f(x,\cdot)(z) \big)\Big|_{z=x}.
\ees
}
But then, using the LSI for $\mu$ { and Young's inequality} we obtain
\be\label{eq:mixing_4}
\begin{split}
\ent_{\mu}(f_0) &\leq \frac{C_{\mu}}{2} \int |\nabla_x f_0(x)|^2/f_0(x)\mu(\De x)  \\
&\leq C_{\mu}\int |P_{0,T'}(\nabla_x f(x,\cdot))(x)|^2(P_{0,T'}f(x,\cdot))^{-1}(x)\,\mu(\De x)\\
&+ C_{\mu} \int |\nabla_z P_{0,T'}(f(x,\cdot))(z)|^2\Big|_{z=x} (P_{0,T'}f(x,\cdot))^{-1}(x)\,\mu(\De x).
\end{split}
\ee
For the first summand  on the rhs of \eqref{eq:mixing_4}, we can argue on the basis of Jensen's inequality applied to the convex function $a,b\mapsto a^2/b$ to obtain
\be\label{eq:mixing_2}
\begin{split}
&  \int |P_{0,T'}(\nabla_x f(x,\cdot))(x)|^2(P_{0,T'}f(x,\cdot))^{-1}(x)\mu(\De x)\\
&\leq  C_{\mu}\int P_{0,T'} \Big(|\nabla_x f(x,\cdot)|^2/f(x,\cdot)\Big)(x)\mu(\De x)\\
&= C_{\mu}\int |\nabla_x f(x,y)|^2/f(x,y) \pi(\De x \De y),
\end{split}
\ee
{ where we used \eqref{eq:diffusion_coupling} to obtain the last identity.} For the second summand on the rhs of \eqref{eq:mixing_4}, we first invoke the gradient estimate \eqref{eq:grad_est} and eventually apply again Jensen's inequality { as we did in the previous calculation} to obtain
\be\label{eq:mixing_3}
\begin{split}
&C_{\mu} \int |\nabla_z P_{0,T'}(f(x,\cdot))(z)|^2\Big|_{z=x} (P_{0,T'}f(x,\cdot))^{-1}(x)\mu(\De x) \\
&{\stackrel{\eqref{eq:grad_est}}{\leq} }C_{\mu}C_{0,T'} \int (P_{0,T'}(|\nabla_yf(x,\cdot)|)(x))^2 (P_{0,T}f(x,\cdot))^{-1}(x)\mu(\De x)\\
&{\stackrel{\text{Jensen}}{\leq} }C_{\mu}C_{0,T'}\int P_{0,T'}\Big(|\nabla_yf(x,\cdot)|^2/f(x,\cdot)\Big)(x)\mu(\De x)\\
&{\stackrel{\eqref{eq:diffusion_coupling}}{=}}C_{\mu} C_{0,T'}\int|\nabla_yf(x,y)|^2/f(x,y)\,\pi(\De x \De y).
\end{split}
\ee
{ Plugging in \eqref{eq:mixing_2}-\eqref{eq:mixing_3} into \eqref{eq:mixing_4} we obtain 
\be\label{eq:mixing_10}
\begin{split}
\ent_{\mu}(f_0) &\leq C_{\mu}\int |\nabla_x f(x,y)|^2/f(x,y) \pi(\De x \De y)\\
&+C_{\mu} C_{0,T'}\int|\nabla_yf(x,y)|^2/f(x,y)\,\pi(\De x \De y).
\end{split}
\ee}
\end{itemize}
{To conclude the proof, we combine the strength of the bounds \eqref{eq:mixing_1} and \eqref{eq:mixing_10} with the entropy decomposition formula \eqref{eq:ent_dec} to obtain
\begin{equation*}
\begin{split}
\ent_{\pi}(f)&\leq C_{\mu}\int |\nabla_x f(x,y)|^2/f(x,y) \pi(\De x \De y)\\
&+(\tilde{C}_{0,T'}/2+C_{\mu} C_{0,T'})\int|\nabla_yf(x,y)|^2/f(x,y)\,\pi(\De x \De y)\\
&\leq \max\{C_{\mu},C_{\mu}C_{0,T'}+ \tilde{C}_{0,T'}/2 \} \int|\nabla f|^2/f\,\De\pi,
\end{split}
\end{equation*}}
which is the desired result.
\end{proof}
In the next lemma, we represent an approximated version of the static Schr\"odinger bridge \eqref{eq:opt_coup} through a diffusion process. It is a classical result saying that Schr\"odinger bridges are indeed Doob's h-transforms, see e.g. \cite[Sec. 4]{LeoSch}\cite{dai1990markov}.
{
\begin{lemma}\label{lem:doob}
Let Assumption \ref{ass:marginals} hold and $\hat\pi$ be the static Schr\"odinger bridge \eqref{eq:opt_coup}. For any $\varepsilon\in(0,T)$ define 
\be\label{eq:opt_coup_approx}
\hat{\pi}^{\varepsilon}(\De x\,\De y) :=(2\pi (T-\varepsilon))^{-d/2}\exp\Big(-\varphi(x)-\hjb{T}{\psi}{T-\varepsilon}(y)-\frac{|y-x|^2}{2(T-\varepsilon)}\Big)\De x \,\De y.
\ee
Then $\hat{\pi}^{\varepsilon}$ has the form \eqref{eq:diffusion_coupling} for $T'=T-\varepsilon$, where $(P_{s,t})_{0\leq s\leq t\leq T-\varepsilon}$ is the time-inhomogeneous semigroup associated with the generator acting on smooth test functions as follows
\be\label{eq:doob_1} 
f \mapsto \frac{1}{2}\Delta f-\langle\nabla \hjb{T}{\psi}{t},\nabla f\rangle, \quad  \,\, t\in[0,T-\varepsilon].\ee
\end{lemma}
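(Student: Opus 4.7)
The plan is to identify $\hat{\pi}^{\varepsilon}$ as the joint law at times $0$ and $T-\varepsilon$ of a Doob $h$-transform of Brownian motion, and then to match the resulting coupling with \eqref{eq:opt_coup_approx} by invoking the Schr\"odinger system \eqref{eq:schr_syst_HJB}. Since $\varepsilon>0$, the map $(t,y)\mapsto \hjb{T}{\psi}{t}(y)$ is smooth on $[0,T-\varepsilon]\times\bbRD$ with controlled derivatives (differentiation under the integral sign in \eqref{eq:HJB_FK} is legitimate), and in particular $h_t(y):=\exp(-\hjb{T}{\psi}{t}(y))$ is a positive classical solution of the backward heat equation $\partial_t h_t+\tfrac12\Delta h_t=0$ on that set. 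This is the Hopf--Cole computation and uses nothing beyond the HJB equation \eqref{eq:HJB} satisfied by $\hjb{T}{\psi}{\cdot}$.

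Next I would invoke the classical Doob $h$-transform construction associated with $h$. It produces a Markov semigroup whose one-step kernels are
\bes
P^h_{s,t}(x,\De y) = \frac{h_t(y)}{h_s(x)}\,(2\pi(t-s))^{-d/2}\exp\Big(-\frac{|y-x|^2}{2(t-s)}\Big)\De y, \quad 0\leq s\leq t \leq T-\varepsilon,
\ees
and whose generator at time $t$ acts on smooth compactly supported test functions as $f\mapsto \tfrac12\Delta f+\nabla\log h_t\cdot\nabla f = \tfrac12\Delta f-\langle\nabla \hjb{T}{\psi}{t},\nabla f\rangle$, which is precisely the generator \eqref{eq:doob_1}. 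By uniqueness of the associated martingale problem, $(P^h_{s,t})$ coincides with the semigroup $(P_{s,t})_{0\leq s\leq t\leq T-\varepsilon}$ appearing in the statement.

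Finally I would compute the joint law of the $h$-transformed process at times $0$ and $T-\varepsilon$ under initial law $\mu$. Using $\mu(\De x)=\exp(-U^\mu(x))\De x$ together with the first line of \eqref{eq:schr_syst_HJB}, namely $U^\mu=\varphi+\hjb{T}{\psi}{0}$, the factor $h_0(x)^{-1}=\exp(\hjb{T}{\psi}{0}(x))$ in $P^h_{0,T-\varepsilon}(x,\De y)$ cancels against the $\exp(-\hjb{T}{\psi}{0}(x))$ hidden in $\mu(\De x)$, leaving
\bes
\mu(\De x)\,P^h_{0,T-\varepsilon}(x,\De y) = \exp\Big(-\varphi(x)-\hjb{T}{\psi}{T-\varepsilon}(y)\Big)\,(2\pi(T-\varepsilon))^{-d/2}\exp\Big(-\frac{|y-x|^2}{2(T-\varepsilon)}\Big)\De x\,\De y,
\ees
which is exactly $\hat{\pi}^{\varepsilon}$ as given by \eqref{eq:opt_coup_approx}. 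This realizes $\hat{\pi}^{\varepsilon}$ in the form \eqref{eq:diffusion_coupling} with $T'=T-\varepsilon$ and $U_t=\hjb{T}{\psi}{t}$.

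The only delicate point is the regularity underlying the Hopf--Cole computation and the identification of the $h$-transform generator with \eqref{eq:doob_1}: one needs $\hjb{T}{\psi}{\cdot}$ to be smooth with controlled gradient so that the associated SDE is well-posed and the kernels $P^h_{s,t}$ are those of the diffusion with drift $-\nabla \hjb{T}{\psi}{t}$. This is exactly why the cutoff $\varepsilon>0$ is introduced: it keeps us strictly before the (possibly singular) terminal time $T$, which together with the mild growth properties of $\psi$ inherited from Assumption \ref{ass:marginals} suffices for smoothness and integrability of $\hjb{T}{\psi}{t}$ and its derivatives uniformly on $[0,T-\varepsilon]\times\bbRD$. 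Once these regularity ingredients are in place, the remainder of the argument is the bookkeeping performed above.
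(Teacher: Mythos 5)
Your proof is correct and reaches the same conclusion by a genuinely different route. The paper derives $\hat{\pi}^{\varepsilon}$ dynamically: it establishes well-posedness of the SDE $\De X_t=-\nabla\hjb{T}{\psi}{t}(X_t)\De t+\De B_t$ via the two-sided Hessian bounds (from Theorem \ref{thm:semiconvexity_estimate_Schr_pot} and $\ell_{\hjb{T}{\psi}{t}}\leq(T-t)^{-1}$), then invokes Girsanov's theorem to write the Radon--Nikodym density of the solution law against Brownian motion on path space, applies It\^o's formula to convert the stochastic integral into boundary terms plus the HJB residual (which vanishes), and finally projects to the two-dimensional marginal. You instead go straight to the transition kernels: having observed via Hopf--Cole that $h_t=\exp(-\hjb{T}{\psi}{t})$ is space-time harmonic, you write down the $h$-transform kernel $P^h_{s,t}(x,\De y)=\frac{h_t(y)}{h_s(x)}p_{t-s}(x,y)\De y$ in closed form and read off $\hat{\pi}^\varepsilon$ immediately after cancelling against the initial density via the Schr\"odinger system. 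Your approach is more compact and avoids path-space integration entirely; the paper's approach is more self-contained in that it does not presuppose the explicit form of the $h$-transform kernel, and it automatically furnishes the dynamic Schr\"odinger bridge $\bbQ^\varepsilon$ as a by-product (which is implicitly relevant to the paper's overall perspective). The one place where you are slightly looser than the paper is the identification of $(P^h_{s,t})$ with the semigroup generated by \eqref{eq:doob_1}: you appeal to uniqueness of a martingale problem, whereas the paper quantifies the requisite regularity concretely via the two-sided Hessian estimate $-C\leq\nabla^2\hjb{T}{\psi}{t}\leq(T-t)^{-1}$ on $[0,T-\varepsilon]$, which is the precise input that makes the drift uniformly Lipschitz and the SDE classically well-posed. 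You gesture at this in your final paragraph; it would be worth making the Hessian bound explicit, since it is exactly what makes the $\varepsilon$-cutoff do its job.
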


\begin{proof}
Let $\psi$ be the Schr\"odinger potential in \eqref{eq:schr_syst}. 
Invoking Theorem \ref{thm:semiconvexity_estimate_Schr_pot} we obtain that \eqref{eq:concavity_propagation_1} holds with $\alpha=\alpha_{\psi}$. Moreover, it is well known that (see \cite[Eq 3.3]{mikulincer2021brownian} for example)
$\ell_{\hjb{T}{\psi}{t}} \leq (T-t)^{-1}$, i.e. the Hessian of $\hjb{T}{\psi}{t}$ is bounded above by $(T-t)^{-1}$. Thereofore, the vector field $[0,T-\varepsilon]\times\bbR^d \ni(t,x)\mapsto-\nabla\hjb{T}{\psi}{t}(x) $ is uniformly Lipschitz w.r.t. the space variable for any $\varepsilon\in(0,T)$. This classically implies existence and uniqueness of strong solutions for the stochastic differential equation 
\be\label{eq:char}
\De X_t = -\nabla \hjb{T}{\psi}{t}(X_t)\De t +\De B_t, \quad X_0\sim\mu
\ee
over any time interval $[0,T-\varepsilon]$ and we shall denote by $\bbQ^{\varepsilon}$ the law of the solution on $C([0,T-\varepsilon];\bbRD)$.
Next, we denote by $\bbP^{\varepsilon}$ the law on law on $C([0,T-\varepsilon];\bbRD)$ of a Brownian motion started at $\mu$. By Girsanov's Theorem, see \cite{LeoGir} for a version that applies in the current setting, we know that 
\bes
\frac{\De\bbQ^{\varepsilon}}{\De\bbP^{\varepsilon}}(\omega) = \exp\Big(-\int_0^{T-\varepsilon} \nabla\hjb{T}{\psi}{t}(\omega_t)\De \omega_t -\frac{1}{2}\int_0^{T-\varepsilon} |\nabla\hjb{T}{\psi}{t}(\omega_t)|^2\De t \Big) \quad \bbP^{\varepsilon}-\text{a.s.},
\ees
where we denote by $\omega$ the typical element of the canonical space $C([0,T-\varepsilon];\bbRD)$.
Using It\^o formula we rewrite the above as
\bes
\begin{split}
\frac{\De\bbQ^{\varepsilon}}{\De\bbP^{\varepsilon}}(\omega)& = \exp\Big(\hjb{T}{\psi}{0}(\omega_0)-\hjb{T}{\psi}{T-\varepsilon}(\omega_{T-\varepsilon})+\int_0^{T-\varepsilon}\Big(\partial_t \hjb{T}{\psi}{t}+\frac12\Delta\hjb{T}{\psi}{t}-\frac{1}{2}|\nabla\hjb{T}{\psi}{t}|^2\Big)(\omega_t)\De t \Big)\\
&=\exp(U^{\mu}(\omega_0)-\varphi(\omega_0)-\hjb{T}{\psi}{T-\varepsilon}(\omega_{T-\varepsilon}))
\end{split}
\ees
where we used the  Schr\"odinger system \eqref{eq:schr_syst} and the HJB equation \eqref{eq:HJB} to obtain the last expression. Indeed because of Theorem \ref{thm:semiconvexity_estimate_Schr_pot} one can deduce that $[0,T]\times\bbRD\ni(t,x)\mapsto\hjb{T}{\psi}{t}(x)$, is a classical solution of \eqref{eq:HJB} by differentiating under the integral sign in \eqref{eq:HJB_FK}. From this, we deduce that
\bes
\frac{\De\bbQ^{\varepsilon}_{0,T-\varepsilon}}{\De\bbP^{\varepsilon}_{0,T-\varepsilon}}(x,y)=\exp\big(U^{\mu}(x)-\varphi(x)-\hjb{T}{\psi}{T-\varepsilon}(y)\big) \quad \bbP^{\varepsilon}_{0T}-\text{a.s.},
\ees
where $\bbQ^{\varepsilon}_{0,T-\varepsilon}$ (resp. $\bbP^{\varepsilon}_{0,T-\varepsilon}$) denotes the joint distribution of $\bbQ^{\varepsilon}$ (resp. $\bbP^{\varepsilon}$) at times $0$ and $T-\varepsilon$. Since 
\bes
\bbP^{\varepsilon}_{0,T-\varepsilon}(\De x \,\De y)= (2\pi (T-\varepsilon))^{-d/2}\exp(-U^{\mu}(x))\exp\Big(-\frac{|y-x|^2}{2(T-\varepsilon)}\Big)\De x \,\De y,
\ees 
we conclude that 
\bes
\bbQ^{\varepsilon}_{0,T-\varepsilon}(\De x \De y ) =(2\pi (T-\varepsilon))^{-d/2}\exp\Big(-\varphi(x)-\hjb{T}{\psi}{T-\varepsilon}(y)-\frac{|y-x|^2}{2(T-\varepsilon)}\Big)\De x \,\De y.
\ees
But then $\bbQ^{\varepsilon}_{0,T-\varepsilon}=\hat{\pi}^{\varepsilon}$, where $\hat{\pi}^{\varepsilon}$ is defined at \eqref{eq:opt_coup_approx}. To conclude, we recall that $\bbQ_{0,T-\varepsilon}$ has the desired form \eqref{eq:diffusion_coupling} where $(P_{s,t})_{0\leq s\leq t\leq T-\varepsilon}$ is indeed the semigroup generated by \eqref{eq:doob_1}.
\end{proof}}

\begin{proof}[Proof of Theorem \ref{thm:LSI}]
We know by Lemma \ref{lem:doob} that $\hat{\pi}^{\varepsilon}$ has the form \eqref{eq:diffusion_coupling} for $T'=T-\varepsilon $ and the inhomogeneous semigroup generated by \eqref{eq:doob_1}. We now set for $t\in[0,T]$
\bes 
\alpha^{\psi}_t= \inf_{x,v\in\bbRD, |v|=1} \langle v,\nabla^2 \hjb{\psi}{T}{t}(x),v\rangle 
\ees 
and proceed to estimate $\alpha^{\psi}_t$ from below. { Invoking Theorem \ref{thm:semiconvexity_estimate_Schr_pot} we obtain that \eqref{eq:concavity_propagation_1} holds with $\alpha=\alpha_{\psi}$. That is to say, the estimate} 
\bes
\kappa_{U^{T,\psi}_t}(r)  \geq \frac{\alpha^{\psi}}{1+(T-t)\alpha^{\psi}}-  \frac{r^{-1}f_L(r)}{(1+(T-t)\alpha^{\psi})^2} 
\ees
{ holds uniformly on $r>0$ and $0\leq t\leq T$.}
From here, using the concavity of $f_L$ and $f'_L(0)=L$ we obtain
\bes
\alpha^{\psi}_t \geq \frac{\alpha^{\psi}}{1+(T-t)\alpha^{\psi}}-  \frac{L}{(1+(T-t)\alpha^{\psi})^2}.
\ees
{ We can now apply Lemma \ref{lem:mixing} to obtain that $\hat{\pi}^{\varepsilon}$ satisfies LSI with constant given by
\bes
\eta_{\varepsilon}:=\max\{2C_{\mu},{2}C_{\mu}C_{0,T-\varepsilon}+ \int_{0}^{T-\varepsilon}C_{t,T-\varepsilon}\De t \}.
\ees
Next, observe that the weak convexity bounds on $\psi$ of Theorem \ref{thm:semiconvexity_estimate_Schr_pot} imply that $\hjb{T}{\psi}{T-\varepsilon}$ converges to $\psi$ pointwise as $\varepsilon\rightarrow0$. But then, we have that $\hat{\pi}^{\varepsilon}$ converges in total variation to $\hat{\pi}$ by Scheff\'e's Lemma. Take now any continuously differentiable function $f$ bounded above and below by positive constants and with bounded derivative. Letting $\varepsilon\rightarrow0$ in
\bes
\mathrm{Ent}_{\hat{\pi}^{\varepsilon}}(f) \leq \frac{\eta_{\varepsilon}}{2} \int \frac{|\nabla f|^2}{f}(x,y)\,\hat{\pi}^{\varepsilon}(\De x\, \De y)
\ees
 and using the convergence in variation of $\hat{\pi}^{\varepsilon}$ obtain that LSI holds for $f$ under $\hat{\pi}$ with the desired constant. The extension to a general positive and continuously differentiable functions is achieved through a standard approximation argument where $f(\cdot)$ is approximated by $f\,\chi(N^{-1}\cdot)+N^{-1}$ with $\chi(\cdot)$ a smooth cutoff function. 
}

\end{proof}

\section{Appendix}
\begin{prop}\label{prop:gen_ass}
Assume that $U$ satisfies \eqref{eq:Eberle_ass} for some $\alpha>0,L',R\geq0$. Then 
\bes
\kappa_U(r) \geq \alpha-r^{-1}f_L(r) \quad \forall r>0.
\ees
with $L$ given by \eqref{eq:bar_L_L}.
\end{prop}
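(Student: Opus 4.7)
The plan is to reduce the statement to a monotonicity property of the single-parameter function $r\mapsto r^{-1}f_L(r)$. Writing $u = rL^{1/2}/2$ gives the identity
\[
r^{-1}f_L(r) = L\,\frac{\tanh(u)}{u},
\]
and since $u\mapsto \tanh(u)/u$ is well-known to be decreasing on $(0,+\infty)$ (it equals $1$ at $0$ and tends to $0$ at infinity, with negative derivative throughout), we conclude that $r\mapsto r^{-1}f_L(r)$ is strictly decreasing on $(0,+\infty)$.

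Next I would analyze the parametric map $\bar L \mapsto R^{-1}f_{\bar L}(R)$ that enters the definition \eqref{eq:bar_L_L} of $L$. Setting $v=R\bar L^{1/2}/2$, one has $R^{-1}f_{\bar L}(R) = 4v\tanh(v)/R^2$, so this map is continuous and strictly increasing in $\bar L$, with limit $0$ as $\bar L\downarrow 0$ and $+\infty$ as $\bar L\to+\infty$. Hence the infimum in \eqref{eq:bar_L_L} is attained and $L$ is in fact the unique value for which $R^{-1}f_L(R) = L'$ (for $L'>0$; the case $L'=0$ gives $L=0$ and the conclusion is trivial).

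With these two facts in place, the proof concludes by splitting cases. For $r>R$, monotonicity of the hyperbolic tangent simply gives $r^{-1}f_L(r)\geq 0$, so $\alpha - r^{-1}f_L(r)\leq \alpha \leq \kappa_U(r)$ by \eqref{eq:Eberle_ass}. For $0<r\leq R$, the decreasing character of $r\mapsto r^{-1}f_L(r)$ combined with $R^{-1}f_L(R)\geq L'$ yields
\[
r^{-1}f_L(r) \;\geq\; R^{-1}f_L(R)\;\geq\; L',
\]
and thus $\alpha - r^{-1}f_L(r) \leq \alpha - L' \leq \kappa_U(r)$, again by \eqref{eq:Eberle_ass}. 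Combining the two regimes yields the claim $\kappa_U(r)\geq \alpha - r^{-1}f_L(r)$ for every $r>0$.

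There is no serious obstacle here: the only nontrivial analytic input is the monotonicity of $\tanh(u)/u$, which is elementary (it follows for instance from $\sinh(u)\cosh(u) < u\cdot(1+\sinh^2(u)/u^2)\cdot u$ being false — more cleanly, from the fact that $\tanh(u) - u\operatorname{sech}^2(u) = \tanh(u)(1-\tanh(u)^2)^{\cdot}\cdots$ which one checks has the correct sign). The rest of the argument is just bookkeeping to match the case distinction in \eqref{eq:Eberle_ass} against the monotone upper bound $r^{-1}f_L(r)$.
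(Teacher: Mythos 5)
Your proof is correct and follows essentially the same route as the paper: split into $r>R$ (where nonnegativity of $f_L$ suffices) and $r\le R$ (where monotonicity of $r\mapsto r^{-1}f_L(r)$ gives $r^{-1}f_L(r)\ge R^{-1}f_L(R)\ge L'$), with the paper deriving that monotonicity from the listed property $f_L(r)\ge rf'_L(r)$ in \eqref{eq:basic properties} rather than via the substitution to $\tanh(u)/u$. Your extra remarks on the well-definedness of $L$ in \eqref{eq:bar_L_L} and the $L'=0$ edge case are sound, though the parenthetical justification of the monotonicity of $\tanh(u)/u$ is garbled and would be cleaner as $\sinh(2u)>2u$ for $u>0$.
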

\begin{proof}
If $r> R$ the claim is a simple consequence of $f_{L}(r)\geq0$. If $r\leq R$, using \eqref{eq:basic properties} to get that $r'\mapsto r'^{-1}f_L(r')$ is non increasing on $(0,+\infty)$, we obtain 
\bes
 r^{-1} f_{L}(r) \geq R^{-1}f_{L}(R)=L',
\ees
from which the conclusion follows.
\end{proof}
\begin{prop}\label{prop:hessian_cov}
Let Assumption \ref{ass:marginals} hold and assume furthermore that there exist $\varepsilon,\gamma'>0$ such that 
\be\label{eq:integr_ass}
\int \exp(\gamma'|x|^{1+\varepsilon})\mu(\De x) <+\infty.
\ee
Moreover, let $\bar\psi$ be as in \eqref{eq:change_variable}. Then $\bar\psi$ is twice differentiable and we have
\bes
\nabla^2\bar\psi(y)=\frac1T \mathrm{Cov}_{X\sim\hat{\pi}^y}(X) \quad \forall y\in \bbRD, 
\ees
where $\hat{\pi}^y$ is given by \eqref{eq:cond_distr}.
\end{prop}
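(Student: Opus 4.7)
The plan is to recognize $\bar\psi$, after a short algebraic manipulation, as a rescaled cumulant generating function and then to read off the identity from the classical formula that the second derivative of a cumulant generating function equals the covariance of the associated exponentially tilted measure. Combining \eqref{eq:change_variable} with the Schr\"odinger system \eqref{eq:schr_syst}, the notation \eqref{eq:HJB_FK}, and the explicit form \eqref{eq:cond_distr} of $V^{\hat\pi^y}$, a direct computation yields
\bes
\bar\psi(y) \,=\, T\log Z(y) + c, \qquad Z(y) := \int_{\bbRD}\exp\!\bigl(-V^{\hat\pi^y}(x)\bigr)\De x,
\ees
where $c$ is a constant independent of $y$. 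By construction the density of $\hat\pi^y$ at $x$ is precisely $\exp(-V^{\hat\pi^y}(x))/Z(y)$, so that averages against $\hat\pi^y$ coincide with averages of the integrand weighted by this density.

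Granting for a moment that $Z$ is of class $C^2$ and that differentiation under the integral is legitimate, I compute using $\partial_{y_i}V^{\hat\pi^y}(x) = -x_i/T$ that
\bes
\partial_{y_i}\log Z(y) = \frac{1}{T}\bbE_{X\sim\hat\pi^y}[X_i], \qquad \partial_{y_j}\partial_{y_i}\log Z(y) = \frac{1}{T^2}\mathrm{Cov}_{X\sim\hat\pi^y}(X_i,X_j).
\ees
Multiplying by $T$ then yields $\nabla\bar\psi(y) = \bbE_{X\sim\hat\pi^y}[X]$ and $\nabla^2\bar\psi(y) = T^{-1}\mathrm{Cov}_{X\sim\hat\pi^y}(X)$, which is the asserted identity.

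The main work is therefore to justify the differentiation, i.e., to produce integrable dominating functions for $|x|^k\exp(-V^{\hat\pi^y}(x))$, $k=0,1,2$, uniformly in $y$ on compact sets. I would proceed through the following chain. From the Schr\"odinger-system identity $\exp(-U^\mu(x)) = \exp(-\varphi(x))\int\exp(-\psi(z)-|x-z|^2/(2T))\De z$ one expresses $\exp(-\varphi(x))\De x$ as $\mu(\De x)$ divided by the $z$-integral, while the remaining factor $\exp(xy/T - |x|^2/(2T))$ is bounded in $x$ whenever $y$ ranges in a bounded set. The weak semiconvexity bound on $\psi$ provided by Theorem \ref{thm:semiconvexity_estimate_Schr_pot}, paired with a local estimate obtained by restricting the $z$-integral to a fixed-radius ball around $z = x$ and using the $C^2$-regularity of $\psi$ to control its local oscillation, produces a lower bound on the denominator that decays at worst like a Gaussian in $|x|$. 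The super-Gaussian integrability assumption \eqref{eq:integr_ass} on $\mu$ then absorbs this correction and produces the required polynomial-weighted integrable majorants.

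The hardest step is precisely this last one: producing a pointwise lower bound on $\int \exp(-\psi(z) - |x-z|^2/(2T))\De z$ that is sharp enough to be compensated by the super-Gaussian tails of $\mu$. Since Theorem \ref{thm:semiconvexity_estimate_Schr_pot} only provides one-sided (convexity) control on $\psi$, one cannot upper-bound $\psi$ globally in closed form, and the lower bound on the integral must rely on the local regularity of $\psi$ rather than on global inequalities. Once this uniform integrability is secured, Leibniz's rule together with dominated convergence delivers both derivative formulas, the $C^2$-regularity of $\bar\psi$, and the identity $\nabla^2\bar\psi(y) = T^{-1}\mathrm{Cov}_{X\sim\hat\pi^y}(X)$.
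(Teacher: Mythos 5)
Your formal computation is identical to the paper's: both write $\bar\psi$ as $T$ times a log-partition function in $y$ and read off the gradient and Hessian as the mean and $T^{-1}$ times the covariance of the tilted measure $\hat\pi^y$. The divergence, and the genuine gap, is in the justification of differentiation under the integral sign, which you correctly identify as the crux but do not close.

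Your proposed route is to express $\exp(-\varphi(x))\,\De x$ as (essentially) $\mu(\De x)$ divided by $P_T\exp(-\psi)(x)=(2\pi T)^{-d/2}\int\exp(-\psi(z)-|x-z|^2/(2T))\,\De z$, and then lower-bound this denominator using local regularity of $\psi$. This step does not go through with the tools at hand. A lower bound on $\int_{B_1(x)}\exp(-\psi(z))\,\De z$ requires a \emph{local upper bound} on $\psi$ (equivalently a bound on its upward oscillation near $x$), but Theorem \ref{thm:semiconvexity_estimate_Schr_pot} furnishes only a weak \emph{semiconvexity} (i.e.\ lower) bound on $\nabla^2\psi$, not a semiconcavity bound, and provides no control on the growth of $\psi(x)$ or $|\nabla\psi(x)|$. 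Moreover, even granting the hoped-for Gaussian lower bound $c\,e^{-c'|x|^2}$, you would then need $\int e^{c'|x|^2}\mu(\De x)<\infty$, which does not follow from \eqref{eq:integr_ass} when $\varepsilon<1$. There is also a logical wrinkle: Proposition \ref{prop:hessian_cov} is offered as an independent proof of \eqref{eq:convexity_propagation_4}, which in turn underlies Theorem \ref{thm:semiconvexity_estimate_Schr_pot}; invoking that theorem here would make the appendix proof non-self-contained.

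The paper's proof avoids this entirely by never trying to control $P_T\exp(-\psi)$. It observes that the $x$-marginal of $\hat\pi(\De x\,\De y)\propto\exp(-\varphi(x)-\psi(y)-|x-y|^2/(2T))\,\De x\,\De y$ is $\mu$, so \eqref{eq:integr_ass} directly yields finiteness of the double integral $\iint\exp(\gamma'|x|^{1+\varepsilon}-\varphi(x)-\psi(y)-|x-y|^2/(2T))\,\De x\,\De y$. Fubini then provides a single $y'$ at which $\int\exp(\gamma'|x|^{1+\varepsilon}-\varphi(x)-|x|^2/(2T)+\langle x,y'\rangle/T)\,\De x<\infty$, and since for any $\gamma<\gamma'$ a linear tilt $\langle x,y-y'\rangle/T$ is eventually dominated by $(\gamma'-\gamma)|x|^{1+\varepsilon}$, the same finiteness holds at every $y$ with $\gamma$ in place of $\gamma'$. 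That gives the dominating functions (with room for the polynomial factors $|x|^k$) without any pointwise lower bound on the normalization. You should replace the hard, unclosed step in your plan by this marginal-plus-Fubini argument.
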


\begin{proof}
From \eqref{eq:schr_syst} we obtain that
\be\label{eq:der_cov_2}
\bar{\psi}(y)+ \frac{d}{2}\log(\pi)=T \log \int_{\bbRD} \exp\Big(-\varphi(x)-\frac{|x|^2}{2T}+\frac{\langle x,y\rangle}{T}\Big)\De x. 
\ee
From Assumption \ref{ass:marginals}, \eqref{eq:schr_syst} and \eqref{eq:integr_ass} it follows that 
\bes
\int_{\bbRD\times\bbRD} \exp\Big(\gamma'|x|^{1+\varepsilon} -\varphi(x)-\psi(y)-\frac{|x-y|^2}{2T}\Big)\De x\,\De y<+\infty,
\ees
whence the existence of some $y'$ such that 
\bes
\int_{\bbRD\times\bbRD} \exp\left(\gamma'|x|^{1+\varepsilon} -\varphi(x)-\frac{|x|^2}{2T} + \frac{\langle x,y'\rangle}{T}\right)\De x <+\infty.
\ees
From this, we easily obtain that for all $\gamma<\gamma'$
\be\label{eq:der_cov_1}
\int_{\bbRD} \exp\left(\gamma|x|^{1+\varepsilon} -\varphi(x)-\frac{|x|^2}{2T} + \frac{\langle x,y\rangle}{T}\right)\De x <+\infty \quad \forall y\in\bbRD.
\ee
Thanks to \eqref{eq:der_cov_1} we can apply the dominated convergence theorem and differentiate under the integral sign in \eqref{eq:HJB} to obtain that $\bar\psi$ is differentiable and 
\bes
\nabla\bar\psi(y) =\frac{\int x \exp(-\varphi(x)-\frac{|x|^2}{2T}+\frac{\langle x,y\rangle}{T})\De x}{\int \exp(-\varphi(\bar{x})-\frac{|\bar{x}|^2}{2T}+\frac{\langle \bar{x},y\rangle}{T})\De \bar{x}} \stackrel{\eqref{eq:cond_distr}}{=} \bbE_{X\sim \hat{\pi}^y}[X]
\ees
Using once again \eqref{eq:der_cov_1} to differentiate under the integral sign in \eqref{eq:der_cov_2} we conclude that $\bar\psi$ is twice differentiable and that \eqref{eq:convexity_propagation_4} holds.
\end{proof}

\bibliographystyle{plain}
\bibliography{Refbis}
\end{document}